\theoremstyle{plain}
\newcommand{\esp}{\operatorname{\mathbb{E}}}
\newcommand{\prob}{\mathbb{P}}
\newcommand{\norm}[1]{\|#1\|}
\newcommand{\real}{\mathbb{R}}
\title[Wishart-type matrix estimation]{Almost sharp covariance and Wishart-type matrix estimation}
\author[P.\ O.\ Santos]{Patrick Oliveira Santos$^1$\\
%        \small $^{1}$Université Gustave Eiffel, Paris \\
%        \small $^{2}$Université Gustave Eiffel, Paris \\
        %\small $^{c}$Department, University, City, Country \\\\
        %\small $^{*}$Corresponding author: first name, initials, surname; \tt{email.address}
}
\thanks{$^1$ LAMA, Université Gustave Eiffel, Paris, France. Email: patrick.oliveirasantos@u-pem.fr}
\begin{document}

\newtheorem{thm}{Theorem}[section]

\newtheorem{coro}[thm]{Corollary}
\newtheorem{lemma}[thm]{Lemma}
\newtheorem{conj}[thm]{Conjecture}
\newtheorem{propo}[thm]{Proposition}

\theoremstyle{definition}
\newtheorem{exa}[thm]{Example}
\newtheorem{defi}[thm]{Definition}
\newtheorem{remark}[thm]{Remark}

\maketitle

\begin{abstract} 
Let $X_1,..., X_n \in \real^d$ be independent Gaussian random vectors with independent entries and variance profile $(b_{ij})_{i \in [d],j \in [n]}$. A major question in the study of covariance estimation is to give precise control on the deviation of $\sum_{j \in [n]}X_jX_j^T-\esp X_jX_j^T$. In this paper, we improve the results in \cite{cai2022non,bandeira2021matrix} and we show that under mild conditions, we have
\begin{align*}
    \esp \left\|\sum_{j \in [n]}X_jX_j^T-\esp X_jX_j^T\right\| \lesssim  \max_{i \in [d]}\left(\sum_{j \in [n]}\sum_{l \in [d]}b_{ij}^2b_{lj}^2\right)^{1/2}+\max_{j \in [n]}\sum_{i \in [d]}b_{ij}^2+\text{error}.
\end{align*}
The error is quantifiable, and we often capture the $4$th-moment dependency already presented in \cite{cai2022non} for some examples. The proofs are based on the moment method and a careful analysis of the structure of the shapes that matter. We also provide examples showing improvement over the past works and matching lower bounds.
\noindent \end{abstract}

%\noindent\keywords{empty}\\

\section{Introduction}\label{section intro}

The study of the norm of random matrices has increased significantly over the years, and bounding the operator norm has been proved one central topic in the field \cite{bai1988necessary,van2017spectral,bandeira2016sharp}. Particularly, several applications coming from statistics require a precise sharp control on the deviations of the empirical covariance problem \cite{koltchinskii2017concentration,minasyan2023statistically,zhivotovskiy2021dimension}. For instance, it is well-known \cite{vershynin2018high} that an i.i.d sample $X_1,...,X_n \in \real^d$ of isotropic Gaussian random vectors satisfies the following deviation
\begin{align}\label{general dimension dependent result}
    \esp\left\|\frac{1}{n}\sum_{j \in [n]}X_jX_j^T-\esp X_1X_1^T\right\| \lesssim \frac{d}{n} \vee \sqrt{\frac{d}{n}}.
\end{align}
Much less is known, however, when the identically distributed condition is removed and we only require independence. Our contribution comes precisely in this direction. In particular, we improve the results in \cite{cai2022non} and we shed light on the $4$th-moment parameter and its graph interpretation that was before unclear.

Let $X$ be a random $d\times n$ Gaussian matrix with independent entries $X_{ij}=b_{ij}g_{ij}$, where $b_{ij} \ge 0$ and $\{g_{ij}:i \in [d],j \in [n]\}$ are independent standard Gaussian random variables $N(0,1)$. Our goal is to bound the quantity
\begin{align*}
    \esp\norm{XX^T-\esp XX^T}=\esp \left\|\sum_{j \in [n]}X_jX_j^T-\esp X_jX_j^T\right\|,
\end{align*}
where $X_j=Xe_j$ is the $j$th column of $X$. One of the first dimension-free results improving bound \eqref{general dimension dependent result} was given in the i.i.d setting in \cite{koltchinskii2017concentration}. Their result states that whenever $Y_1,...,Y_n$ are i.i.d Gaussian random vectors in $\real^d$, we have
\begin{align*}
   \esp \norm{YY^T-\esp YY^T} \asymp \norm{\Sigma}\left(\sqrt{n\text{rk}(\Sigma)},\text{rk}(\Sigma)\right),
\end{align*}
where
\begin{align*}
    \text{rk}(\Sigma)=\frac{\text{tr}(\Sigma)}{\norm{\Sigma}}
\end{align*}
is the effective rank of the covariance matrix $\Sigma=\esp Y_1Y_1^T$. The dependency on the sample size $n$ is sharp in all i.i.d cases, but much less is evident when the vectors are not identically distributed. 

In an orthogonal direction, Bandeira and van Handel \cite{bandeira2016sharp} proved that
\begin{align*}
    \esp \norm{X} \lesssim \sigma_C+\sigma_R+C\sigma_* \sqrt{\log (n \wedge d)},
\end{align*}
where $\sigma_C$ is the maximum Euclidean norm of columns of $B=(b_{ij})$, $\sigma_R$ is the maximum Euclidean norm of rows of $B$ and $\sigma_*$ is the maximum entry of $B$, that is,
\begin{align*}
    &\sigma_C^2=\max_{j \in [n]}\sum_{i\in [d]}b_{ij}^2;\\
    &\sigma_R^2=\max_{i \in [d]}\sum_{j \in [n]} b_{ij}^2;\\
    &\sigma_*=\max_{(i,j) \in [d]\times [n]}|b_{ij}|.
\end{align*}
To prove such a result, they compared the moments $\esp \text{tr}(XX^T)^p$ to the moments of a standard Gaussian matrix $\esp \text{tr}(GG^T)^p$ with reduced dimensions. This comparison method turned out to be also efficient to prove the estimations for the covariance problem as well. In \cite{cai2022non}, T. Cai, Han and Zhang applied these techniques to $XX^T-\esp XX^T$ and they proved that

\begin{align*}
    \esp \norm{XX^T-\esp XX^T} \lesssim \sigma_C\sigma_R+\sigma_C^2+C(\sigma_C\sigma_*+\sigma_R\sigma_*)\sqrt{\log (n \wedge d)}+C'\sigma_*^2\log (n\wedge d).
\end{align*}
The leading term $\sigma_C\sigma_R+\sigma_C^2$ is not always sharp. Indeed, studying the case $b_{ij}=b_{j}$, that is, the rows are i.i.d, the authors of \cite{cai2022non} proved that
\begin{align*}
    \esp \norm{XX^T-\esp XX^T} \asymp \sqrt{d\sum_j b_{j}^4}+d\max_jb_j^2=\sqrt{d\sum_j b_{j}^4}+\sigma_C^2.
\end{align*}
Our main contribution shed light on this $4$th-moment parameter and how it appears from the moment method.

We begin our results for the operator norm. Define the parameters:
\begin{align*}
    &\bullet \tilde{\sigma}_\infty^2=\max_{i,l:i \ne l}\sum_{j \in [n]}b_{ij}^2b_{lj}^2;\\
    &\bullet \bar{\sigma}_\infty^2=\max_{i \in [d]}\sum_{j \in [n]}b_{ij}^4;\\
    &\bullet\sigma_\infty^2=\max_{i \in [d]}\sum_{j \in [n]}\sum_{l:l\ne i}b_{ij}^2b_{lj}^2;\\
    &\bullet\beta_\infty=\frac{\tilde{\sigma}_\infty \sigma_C}{\sigma_\infty \sigma_*}.
\end{align*}
Notice in particular that $\tilde{\sigma}_\infty \le \bar{\sigma}_\infty$, by Cauchy-Schwarz inequality.
\begin{thm}\label{main theorem}
Let $X$ be a $d \times n$ Gaussian matrix with independent entries such that $X_{ij}=b_{ij}g_{ij}$ where $\{g_{ij}: (i,j) \in [d]\times [n]\}$ are i.i.d standard Gaussian r.v. Then, if $\beta_\infty \le 1$, we have
\begin{align*}
    &\esp \norm{XX^T-\esp XX^T}=\esp \left\|\sum_{j \in [n]}X_jX_j^T-\esp X_jX_j^T\right\| \\
    &\le (1+\varepsilon)\left\{
    2\sigma_\infty+\sigma_C^2+C(\varepsilon)\sigma_*\left(\sigma_C+\frac{\sigma_\infty}{\sigma_C}\right)\sqrt{\log(n \wedge d)}+C^2(\varepsilon)\sigma_*^2 \log (n\wedge d)
    \right\},
\end{align*}
for any $0<\varepsilon \le 1/2$. Otherwise, $\beta_\infty>1$ and we have
\begin{align*}
    &\esp \norm{XX^T-\esp XX^T} \\
    &\le (1+\varepsilon)\left\{
    \frac{2\tilde{\sigma}_\infty \sigma_C}{\sigma_*}+\sigma_C^2+C(\varepsilon)\left(\sigma_C\sigma_*+\bar{\sigma}_\infty\right)\sqrt{\log(n\wedge d)}+C^2(\varepsilon)\sigma_*^2 \log (n\wedge d)
    \right\}.
\end{align*}
The constant $C(\varepsilon)$ is
\begin{align*}
    C(\varepsilon)=\frac{C(1+\varepsilon)}{\sqrt{\log (1+\varepsilon)}},
\end{align*}
where $C$ is a universal constant.
\end{thm}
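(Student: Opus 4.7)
The plan is to use the moment method. For any positive integer $p$,
\begin{align*}
\esp \norm{M} \le \bigl(\esp \mathrm{tr}(M^{2p})\bigr)^{1/2p}, \qquad M := XX^T - \esp XX^T,
\end{align*}
and I will eventually choose $p$ so that $p\log(1+\varepsilon) \asymp \log(n\wedge d)$, producing both the $(1+\varepsilon)$ prefactor and the $C(\varepsilon)$ constants. Writing $M_{ik} = \sum_j b_{ij}b_{kj}(g_{ij}g_{kj} - \delta_{ik})$ and expanding the trace gives
\begin{align*}
\esp \mathrm{tr}(M^{2p}) = \sum_{\mathbf{i},\mathbf{j}} \esp \prod_{k=1}^{2p} b_{i_k j_k}\, b_{i_{k+1} j_k}\bigl(g_{i_k j_k}\, g_{i_{k+1} j_k} - \delta_{i_k i_{k+1}}\bigr),
\end{align*}
with $\mathbf{i}=(i_1,\ldots,i_{2p})\in[d]^{2p}$ (cyclic modulo $2p$) and $\mathbf{j}\in[n]^{2p}$. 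Each summand corresponds to a closed walk of length $2p$ on the bipartite graph with vertex set $[d]\sqcup[n]$, where step $k$ visits column vertex $j_k$ between row vertices $i_k$ and $i_{k+1}$. Wick's formula then reduces the Gaussian expectation to a sum over perfect pairings of the $4p$ Gaussian factors, and the centering kills all pairings corresponding to a diagonal visit $i_k=i_{k+1}$.

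Next I would group walks by the isomorphism class of the colored bipartite multigraph they realize---its \emph{shape}---so that the moment becomes a weighted enumeration: each shape contributes a product of $b_{ij}^2$-weights along its edges, multiplied by the number of embeddings of its vertex set into $[d]\times[n]$. The combinatorial heart of the proof is the claim that the dominant contribution comes from ``double-tree'' shapes in which every edge of the underlying tree is traversed exactly twice. Their count follows a Catalan enumeration on $p$ edges, and summed over shapes they produce the leading $2\sigma_\infty$ term (respectively the alternative $2\tilde\sigma_\infty \sigma_C/\sigma_*$, when the extremal leaf of the double-tree is forced into a row-repeated configuration) together with the $\sigma_C^2$ contribution coming from shapes whose walk repeatedly returns to a fixed row vertex through a single column. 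The dichotomy between the two leading forms is precisely the comparison $\beta_\infty \le 1$ versus $\beta_\infty > 1$, which amounts to choosing the cheaper of the two possible contractions of a pendant leaf in the double-tree.

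Every remaining shape carries at least one excess edge or branching, and each such defect introduces an additional factor bounded by $\sigma_*^2$, $\bar\sigma_\infty$, $\sigma_C\sigma_*$, or $\sigma_\infty/\sigma_C$. Together with the $p$-dependent combinatorial multiplicities these contributions produce the subleading terms carrying $\sqrt{\log(n\wedge d)}$ and $\log(n\wedge d)$ after optimizing $p$, and the identity $(1+c/p)^p = e^c(1+o(1))$ absorbs the combinatorial prefactors into the multiplicative $(1+\varepsilon)$ in front.

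The main obstacle will be the shape classification itself: identifying exactly the small family of shapes whose weights cannot be further bounded by an extra $\sigma_*^2$ factor, and extracting from them the clean leading quantities $\sigma_\infty$, $\tilde\sigma_\infty\sigma_C/\sigma_*$, and $\sigma_C^2$ without overcounting. I would attack it by induction on the shape---pruning leaves and contracting degree-two vertices---coupled with a ``leaf-refinement'' step that replaces a $\sigma_\infty$ factor by $\tilde\sigma_\infty\sigma_C/\sigma_*$ precisely in the regime $\beta_\infty > 1$. Once this combinatorial classification is in place, the remaining estimates reduce to Cauchy--Schwarz and standard Wick-type bookkeeping.
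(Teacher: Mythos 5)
Your plan correctly identifies the moment method and some of the combinatorial structure, but it contains a false step and misses the device that actually makes the paper's proof work.

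First, the claim that ``the centering kills all pairings corresponding to a diagonal visit $i_k=i_{k+1}$'' is wrong. When $i_k=i_{k+1}$ the factor is $g_{i_kj_k}^2-1$, which is centered, but its joint moments with other occurrences of the same Gaussian need not vanish: by the paper's Lemma \ref{joint moments Gaussians lemma}, $\esp g^n(g^2-1)^m>0$ whenever $n$ is even and $(n,m)\ne(0,1)$. So diagonal visits do contribute whenever the corresponding entry recurs along the walk. The paper sidesteps this entirely by splitting $XX^T-\esp XX^T$ into the off-diagonal part $\Delta XX^T$ (where every shape has $u_k\ne u_{k+1}$, so one only meets clean Wick pairings) and the diagonal part $\mathrm{Diag}(XX^T)-\esp XX^T$, the latter handled by Bernstein's inequality in Theorem \ref{thm diagonal} and contributing the $\sqrt p\,\bar\sigma_p+p b_p^2$ error. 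Without this split your expansion does not simplify the way you assert, and the leading constant $2$ would not emerge.

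Second, and more structurally, you propose to classify shapes directly --- identifying ``double trees'' via Catalan enumeration as the dominant family and charging every defect an extra $\sigma_*^2$-type factor. The paper does not do this, and for good reason: re-deriving the sharp constant from a shape census is exactly the hard part. Instead the paper bounds $W(s)$ \emph{uniformly} in terms of only $m_1(s),m_2(s)$ (Proposition \ref{propo for the shapes and operator norm}, via pruning a spanning tree extracted from first-time arrivals; the $\beta_\infty$ dichotomy comes from whether the spanning tree retains the property that every right vertex has at least two neighbors), and then \emph{compares}
\begin{align*}
\esp \operatorname{Tr}(\Delta XX^T)^p \le \frac{d}{r_2}\,\esp\operatorname{Tr}(\Delta GG^T)^p,
\end{align*}
where $G$ is a standard $r_2\times r_1$ Gaussian matrix with $r_1\approx (\sigma_\infty/\sigma_C)^2+p/2$, $r_2\approx\sigma_C^2+p/2$. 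All the Catalan counting and the sum over $L(s)$ over shapes are then absorbed into the known sharp bound of Corollary \ref{coro norm of off-diagonal standard}. This Bandeira--van Handel/Cai--Han--Zhang reduction to a standard Gaussian matrix with modified dimensions is the essential idea missing from your outline; once you acknowledge that you are ``attacking'' the shape classification as an open obstacle, you have in effect not yet started the proof. Your pruning-and-leaf-refinement intuition is in the right spirit and aligns with the proof of Proposition \ref{propo for the shapes and operator norm}, but it is used there to bound a single $W(s)$, not to enumerate dominant shapes.
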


Theorem \ref{main theorem} improves Theorem 2.1 from \cite{cai2022non}. When $\beta_\infty \le 1$, we get the sharp constant $2\sigma_\infty$ on the right-hand side. Moreover, if $\beta_\infty=O(1)$, our result still gives the correct order of magnitude. 

%We normalize $\beta_\infty \le 1$ to recover sharp constants in the estimation, but the case $\beta_\infty=O(1)$ still shows the right order of magnitude. On the other hand, the case $\beta_\infty \gg 1$ improves the bound in \cite{cai2022non}, however it is not necessarily sharp.

Since the method of proof uses the moment method, we can extend Theorem \ref{main theorem} to estimate Schatten norms. Recall that the $p$-Schatten norm is defined by
\begin{align*}
    \norm{A}_{S_p}^p=\text{Tr}(A)^p,
\end{align*}
for a positive matrix $A$. It is also the same as the $p$-norm of the singular values of $A$. This time, we define the more involved parameters
\begin{align*}
&\bullet \sigma_p=\left\{\sum_{i \in [d]}\left[\sum_{j \in [n]}\sum_{l \in [d]}b_{ij}^2b_{lj}^2\right]^{p/2}\right\}^{1/p};\\
&\bullet \overline{\sigma}_p=\left\{\sum_{i \in [d]}\left[\sum_{j \in [n]}b_{ij}^4\right]^{p/2}\right\}^{1/p};\\
&\bullet b_p=\left\{\sum_{i \in [d]}\max_{j \in [n]}b_{ij}^{2p}\right\}^{1/(2p)};\\
&\bullet \beta_p=\frac{\bar{\sigma}_p\sigma_C}{\sigma_p b_p}.
\end{align*}

Our second main theorem is the following.
\begin{thm}\label{thm second main}
Let $X$ be a $d \times n$ Gaussian matrix with independent entries such that $X_{ij}=b_{ij}g_{ij}$ where $\{g_{ij}: (i,j) \in [d]\times [n]\}$ are i.i.d standard Gaussian r.v. Then,  if $\beta_p \le 1$, we have
\begin{align*}
(\esp \emph{Tr}[XX^T-\esp XX^T]^p)^{1/p} \le d^{1/p}\left\{2\sigma_p+\sigma_C^2+C\sqrt{p}\left(\sigma_C\sigma_*+\frac{\sigma_p\sigma_*}{\sigma_C}\right)+C'pb_p^2\right\}.
\end{align*}
Otherwise, $\beta_p>1$ and
\begin{align*}
 (\esp \emph{Tr}[XX^T-\esp XX^T]^p)^{1/p} \le d^{1/p}\left\{\frac{2\overline{\sigma}_p\sigma_C}{\sigma_*}+\sigma_C^2+C\sqrt{p}\left(\sigma_C\sigma_*+\overline{\sigma}_p\right)+ C'pb_p^2\right\}.
\end{align*}
\end{thm}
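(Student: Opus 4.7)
The plan is the moment method: prove a sharp bound on $\esp \text{Tr}[(XX^T - \esp XX^T)^p]$, then take the $p$-th root. This moment estimate is in fact the same engine as the proof of Theorem~\ref{main theorem}, which is recovered by picking $p \asymp \log(n \wedge d)$ and using $\|A\| \le (\text{Tr}|A|^p)^{1/p}$. Theorem~\ref{thm second main} is just the raw moment bound expressed in Schatten form, without the operator-norm specialization.

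First I would expand the trace moment as
\[
\text{Tr}[(XX^T - \esp XX^T)^p] = \sum_{\mathbf{i} \in [d]^p,\,\mathbf{j} \in [n]^p} \prod_{k=1}^{p}\bigl(X_{i_k j_k} X_{i_{k+1} j_k} - b_{i_k j_k}^2 \mathbf{1}_{i_k = i_{k+1}}\bigr),
\]
with cyclic convention $i_{p+1} = i_1$. This rewrites the moment as a sum over closed walks of length $2p$ on the bipartite graph with color classes $[d]$ and $[n]$. Taking expectation and applying Gaussian Wick, the product becomes a sum over perfect matchings of the $2p$ Gaussian factors; the centering $-\esp XX^T$ exactly cancels the matchings that pair both factors inside one parenthesis. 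Each surviving matching collapses the walk to a shape (a quotient bipartite graph) and contributes a product of $b_{ij}^2$'s weighted by the number of its realizations.

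Next I would classify the shapes by type. The dominant shapes are "double trees" whose weight sums concentrate on either $\sigma_p$ or $\bar{\sigma}_p \sigma_C/\sigma_*$; the parameter $\beta_p$ encodes which of the two is responsible for the bound. An isolated two-cycle shape contributes the $\sigma_C^2$ term. The remaining shapes carry one or two "defective" (non-tree) edges and are controlled by factors of $b_p$ times $\sigma_C$-type quantities, producing the $\sqrt{p}$ and $p$ corrections. For each shape $S$, summing over the bipartite base vertex yields an $\ell^p$-type norm over the rows of $B$, which is precisely why the $p$-versions $\sigma_p,\bar{\sigma}_p,b_p$ appear in place of their $\infty$-counterparts from Theorem~\ref{main theorem}. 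The factor $d^{1/p}$ emerges on taking the $p$-th root, from summing over the unique "free" base vertex of the dominant shape.

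The main obstacle is the careful accounting of shapes needed to extract the sharp leading constant $2$ in front of $\sigma_p$ (respectively $\bar{\sigma}_p \sigma_C/\sigma_*$). This requires isolating the Catalan-like family of planar double-tree shapes that saturates the leading order, and verifying that every non-leading shape gives a strictly smaller constant once summed with its multiplicity. The two-regime formulation via $\beta_p$ is essentially a device to track which of the two competing leading shapes dominates; once the moment bound is assembled and Minkowski is applied in $L^p$, taking $p$-th roots yields the claimed Schatten estimate.
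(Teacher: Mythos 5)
Your starting point (expand the trace moment, reduce to a sum over shapes) is the same as the paper's, and the Wick-cancellation observation you make — that centering kills exactly the pairings with an internal pair — is correct. But from there your plan diverges significantly, and the divergence is where the gaps are.

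The paper does not attempt a direct Catalan-type enumeration of dominant shapes to extract the leading constant~$2$. That route would be very hard here, and the paper avoids it entirely by a \emph{comparison argument}: it first strips off the diagonal (Theorem~\ref{thm diagonal}, handled by Bernstein and giving $\sqrt{p}\overline{\sigma}_p + p b_p^2$), so that for the off-diagonal part all right vertices of a surviving shape have at least two neighbors; then Proposition~\ref{propo for the shapes and the schatten norm} bounds each shape weight $W(s)$ by $d\,\sigma_*^{2p}\,a^{2m_1(s)}\,b^{2(m_2(s)-1)}$ for suitable $(a,b)$; then one compares
\begin{align*}
\esp\operatorname{Tr}(\Delta XX^T)^p \;=\; \sum_{s}L(s)W(s) \;\le\; \frac{d}{r_2}\sum_s L(s)\,\frac{r_1!}{(r_1-m_1)!}\frac{r_2!}{(r_2-m_2)!} \;=\; \frac{d}{r_2}\,\esp\operatorname{Tr}(\Delta GG^T)^p
\end{align*}
for a standard $r_2\times r_1$ Gaussian $G$ with $r_1=\lceil a^2\rceil+p/2$, $r_2=\lceil b^2\rceil+p/2$, and invokes the sharp bound on $\esp\|\Delta GG^T\|^p$ (Corollary~\ref{coro norm of off-diagonal standard}). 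The constant~$2$ and the $\sqrt{p}$, $p$ correction terms all come from that known reference estimate, not from a shape-by-shape classification. Your attribution of $\sigma_C^2$ to an "isolated two-cycle shape" and of the $\sqrt{p},p$ terms to "defective non-tree edges" does not match this mechanism — $\sigma_C^2$ appears as the additive $r_2$ term in the reference bound, and the $\sqrt{p},p$ corrections come partly from the reference bound and partly from the diagonal's Bernstein fluctuations.

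The genuine missing ingredient in your plan is the proof of the $W(s)$ bound itself. Obtaining the $\ell^p$-type parameters $\sigma_p,\overline{\sigma}_p,b_p$ is not a matter of "summing over the base vertex"; it requires the reduction-to-spanning-tree step (Lemma~\ref{lemma reduction to trees}) followed by the iterative H\"older pruning of right leaves (Lemma~\ref{lemma after the reduction} and Corollary~\ref{holder type inequality coro for trees}), with conjugate exponents $p_v=|k|/k_v$ chosen so that the final product of $\ell^{p_v}$-norms collapses, via norm interpolation $\|\cdot\|_{|k|/k_v}\le d^{(k_v-4)/|k|}\|\cdot\|_{|k|/4}$, into $d\cdot\sigma_p^{2|\mathcal L|}\overline{\sigma}_p^{2|\mathcal L^c|}$ times powers of $\sigma_*,\sigma_C$. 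The dichotomy in $\beta_p$ is then just the observation that the extremal choice is $|\mathcal L|=0$ or $|\mathcal L|=m_1$. Without this lemma you have no mechanism producing $\sigma_p$ and $\overline{\sigma}_p$, and the Catalan enumeration you propose to substitute for the comparison step would have to reprove the known standard-Gaussian estimate from scratch, which is far more work than the theorem requires.
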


%------------------------------------------------------

\subsection{Main ideas of the proof}

The proof relies on the moment method and a careful analysis of paths. We will first remove the diagonal $\text{Diag}(XX^T)$, so that
\begin{align*}
    \esp \norm{XX^T-\esp XX^T} \le \esp \norm{\Delta XX^T}+\esp \norm{\text{Diag}(XX^T)-\esp XX^T},
\end{align*}
where $\Delta XX^T$ is the matrix of off-diagonal elements of $XX^T$. It turns out that the contribution of the diagonal is sufficiently small and can be added as an error factor (see Theorem \ref{thm diagonal}). On the other hand, the combinatorics of $\Delta XX^T$ are much easier to deal with. In particular, all paths in the complete bipartite graph over $[d] \sqcup [n]$ have all right vertices with at least two neighbors. 

We then proceed with the moment method. Note that 
\begin{align*}
    \esp \norm{Y} \le \esp \norm{Y}_{S_p} \le d^{1/p}\esp \norm{Y},
\end{align*}
for any symmetric $d \times d$ matrix. Hence,
\begin{align*}
    \esp \norm{Y} \le (\esp \norm{Y}_{S_p}^p)^{1/p},
\end{align*}
by Jensen's Inequality. We apply this for $Y=\Delta XX^T$ and our goal is to obtain a comparison lemma such as
\begin{align*}
    \esp \text{Tr}(\Delta XX^T)^p \le \kappa \esp \text{Tr}(\Delta GG^T)^p,
\end{align*}
where $\kappa>0$ and $G$ is a Gaussian matrix with reduced dimensions as in \cite{bandeira2016sharp}.

\subsection{Outline of the paper} The paper is organized as follows. In section \ref{section proofs}, we will provide the main proofs of theorems \ref{main theorem} and \ref{thm second main}. In section \ref{sec examples}, we will give examples to illustrate the improvement from the previous results. Finally, in section \ref{section lower bounds}, we will prove almost sharp matching lower bounds for our main theorems.

\bigskip
\textit{Notation.} Let us clarify some notation used throughout the paper. We denote $a \lesssim b$ or $a=O(b)$ if there exists an absolute constant $C$ such that $a \le Cb$. We also denote it as $b \gtrsim a$. If $a \lesssim b$ and $b \lesssim a$ hold, we denote $a \asymp b$. We write $a \wedge b= \min(a,b)$ and $a \vee b=\max(a,b)$. We denote $[n]=\{1,\ldots,n\}$ and $A \sqcup B$ is the disjoint union of two sets $A$ and $B$. Finally, we use $C,c, C',\ldots$ for universal numerical constants.

\paragraph*{\bfseries Acknowledgments.}
We thank Olivier Guédon for pointing out this problem and helpful discussions.

%Finally, in Section \ref{sec complements}, we will complement our results for the Schatten norm in Subsection \ref{sub schatten}, and lower bounds on Subsection \ref{sub lower bounds}.

%----------------------------------------------------------------

\section{Proofs}\label{section proofs}
\subsection{Preliminaries}\label{sub preliminaries}
We begin by recalling the Gaussian integration by parts lemma.
\begin{lemma}
    Let $g \sim N(0,1)$ be a standard Gaussian r.v. and $f \in C^1(\mathbb{R})$, then
\begin{align*}
    \esp gf(g)=\esp f'(g).
\end{align*}
\end{lemma}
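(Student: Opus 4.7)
The plan is the standard one: reduce the expectation to a one-dimensional integral against the Gaussian density and integrate by parts. Let $\phi(x) = (2\pi)^{-1/2} e^{-x^{2}/2}$ denote the density of $g$. The key algebraic identity is that $\phi$ satisfies the first-order ODE $\phi'(x) = -x\,\phi(x)$, which lets us replace the factor $x$ in the integrand by a derivative on the density.

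Writing the expectation as an integral, I would compute
\begin{align*}
    \esp g f(g) = \int_{\mathbb{R}} x f(x) \phi(x)\, dx = -\int_{\mathbb{R}} f(x) \phi'(x)\, dx,
\end{align*}
and then apply integration by parts on the right-hand side. Assuming the boundary term
\begin{align*}
    \bigl[-f(x)\phi(x)\bigr]_{-\infty}^{+\infty}
\end{align*}
vanishes (which is automatic for instance when $f$ has at most polynomial or mildly exponential growth at infinity, since $\phi$ decays faster), one obtains $\esp g f(g) = \int_{\mathbb{R}} f'(x)\phi(x)\, dx = \esp f'(g)$, which is the desired identity.

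The only real obstacle is a technical one, namely justifying the vanishing of the boundary term and the applicability of integration by parts for a general $f \in C^{1}(\mathbb{R})$. In practice the lemma is applied with $f$ growing at most polynomially (or with an explicit integrability hypothesis on $gf(g)$ and $f'(g)$), so I would either state such a mild growth assumption or invoke a standard truncation argument: integrate by parts on $[-R,R]$, show that $f(\pm R)\phi(\pm R) \to 0$ using the assumed growth, and pass to the limit by dominated convergence applied to $f'(g)\phi(g)$. No further ideas are required.
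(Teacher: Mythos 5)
Your argument is the standard and correct one: differentiate the Gaussian density to get $\phi'(x)=-x\phi(x)$, substitute into $\esp gf(g)=\int x f(x)\phi(x)\,dx$, and integrate by parts on $[-R,R]$ with the boundary terms killed by the rapid decay of $\phi$. The paper itself does not prove this lemma — it is recalled as a known fact (Gaussian integration by parts, i.e.\ Stein's identity) — so there is no paper proof to compare against; your write-up is the expected textbook derivation. You are also right to flag that the statement as written, with only $f\in C^1(\mathbb{R})$, implicitly assumes enough integrability (e.g.\ $gf(g)$ and $f'(g)$ integrable, or at-most-mildly-exponential growth of $f$) for the boundary term and the passage to the limit $R\to\infty$ to go through; the paper only ever applies the lemma to polynomial $f$, so this is harmless in context.
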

The authors of \cite{cai2022non} deduced from this lemma a simple property of the joint moments of $g$ and $g^2-1$.
\begin{lemma}\label{joint moments Gaussians lemma}
    Let $a_{n,m}=\esp g^n(g^2-1)^m$, where $g \sim N(0,1)$. Then $a_{n,m} \ge 0$ and $a_{n,m}=0$ if and only if $n$ is odd or $(n,m)=(0,1)$.
\end{lemma}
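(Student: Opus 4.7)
\medskip

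\noindent\textbf{Proof plan.} The plan is to separate the vanishing cases by an easy parity argument and then establish positivity in all remaining cases via a recurrence obtained from Gaussian integration by parts.

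First I would dispatch the trivial cases. If $n$ is odd, the integrand $g^{n}(g^{2}-1)^{m}$ is an odd function of $g$, so $a_{n,m}=0$ by symmetry. For $(n,m)=(0,1)$ one has $a_{0,1}=\esp g^{2}-1=0$. In the opposite direction, $a_{0,0}=1$, so the pair $(0,1)$ really needs to be singled out.

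Next, for even $n$, the plan is to produce a recurrence relating $a_{n,m}$ to $a_{n-2,m}$ and $a_{n,m-1}$. Applying the Gaussian integration by parts lemma to $f(g)=g^{n-1}(g^{2}-1)^{m}$ and differentiating via the product rule gives, for $n\ge 1$,
\begin{equation*}
a_{n,m}=\esp\bigl[g\cdot g^{n-1}(g^{2}-1)^{m}\bigr]=(n-1)\,a_{n-2,m}+2m\,a_{n,m-1}.
\end{equation*}
This handles every even $n\ge 2$: both coefficients are nonnegative, both $a_{n-2,m}$ and $a_{n,m-1}$ are nonnegative by induction, and for $(n,m)\ne (2,1)$ the term $(n-1)a_{n-2,m}$ is already strictly positive (since $a_{n-2,m}=0$ only in the inductive exceptional cases, which do not occur here). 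The single remaining case $(n,m)=(2,1)$ is handled by direct substitution into the recurrence: $a_{2,1}=a_{0,1}+2a_{2,0}=2>0$.

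The only case not yet covered is $n=0$, $m\ge 2$, where the recurrence above is vacuous. To close the loop I would write $(g^{2}-1)^{m}=(g^{2}-1)(g^{2}-1)^{m-1}$ to obtain
\begin{equation*}
a_{0,m}=a_{2,m-1}-a_{0,m-1},
\end{equation*}
and then substitute the $n=2$ recurrence $a_{2,m-1}=a_{0,m-1}+2(m-1)a_{2,m-2}$ to collapse this to $a_{0,m}=2(m-1)a_{2,m-2}$. By the previous paragraph $a_{2,m-2}>0$ for every $m\ge 2$, so $a_{0,m}>0$.

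I do not expect any serious obstacle here: the whole argument is a short induction once the integration-by-parts identity is in place. The only subtle point is organizing the induction so that the exceptional pair $(0,1)$ is not accidentally dragged into the strict-positivity step, which is why it is convenient to verify $(2,1)$ as a hand base case before invoking the even-$n$ recurrence in full generality.
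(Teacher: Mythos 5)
The paper does not actually prove this lemma: it states it as imported from \cite{cai2022non}, immediately after quoting the Gaussian integration-by-parts identity, with the remark that the authors of that reference deduced the lemma from it. Your route — integration by parts applied to $f(g)=g^{n-1}(g^2-1)^m$ to get the recurrence $a_{n,m}=(n-1)a_{n-2,m}+2m\,a_{n,m-1}$, parity for odd $n$, and the auxiliary identity $a_{0,m}=a_{2,m-1}-a_{0,m-1}=2(m-1)a_{2,m-2}$ for the $n=0$ boundary — is exactly the line of argument the paper is gesturing at, and the recurrences you write are correct.

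One thing to tighten: your stated ordering is slightly circular. You first claim strict positivity for all even $n\ge 2$ \enquote{by induction,} then dispose of $n=0,\ m\ge 2$ afterwards via $a_{0,m}=2(m-1)a_{2,m-2}$, citing \enquote{the previous paragraph.} But the first paragraph, at $(n,m)=(2,m)$ with $m\ge 2$, already needs $a_{0,m}\ge 0$, which is the quantity you defer; and conversely $a_{2,m-2}$ for $m\ge 4$ feeds back into $a_{0,m-2}$ via the main recurrence. The fix is to run an outer induction on $m$, and at each level $m\ge 2$ handle $n=0$ first via $a_{0,m}=2(m-1)a_{2,m-2}$ (which only touches level $m-2$), then sweep $n=2,4,\dots$ via the main recurrence (which touches $(n-2,m)$, already done at this level, and $(n,m-1)$, from the previous level). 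With base cases $m=0$ ($a_{n,0}=(n-1)!!>0$ for even $n$) and $m=1$ ($a_{0,1}=0$, $a_{2,1}=2$, and $a_{n,1}\ge 2a_{n,0}>0$ for even $n\ge 4$), this closes cleanly and gives both $a_{n,m}\ge 0$ and the strict positivity outside $\{n\ \text{odd}\}\cup\{(0,1)\}$. So the proposal is correct in substance; only the bookkeeping of the induction needs to be reorganized.
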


We also recall the sharp bound on the operator norm for a standard Gaussian matrix shown in \cite{cai2022non}.
\begin{propo}\label{propo standard Gaussian bound}
    Let $G$ be a $d\times n$ Gaussian matrix with i.i.d standard Gaussian r.v. entries. Then, for any $p \ge 2$ we have
    \begin{align*}
        (\esp \norm{GG^T-\esp GG^T}^p)^{1/p} \le 2\sqrt{dn}+d+4\sqrt{p}(\sqrt{d}+\sqrt{n})+2p.
    \end{align*}
\end{propo}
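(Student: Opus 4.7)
The plan is to reduce the bound to controlling the extremal singular values of $G$ and then apply Gaussian concentration. Since $\esp GG^T = nI_d$, the matrix $GG^T - \esp GG^T$ is symmetric with eigenvalues $s_i(G)^2 - n$, so
\begin{equation*}
\norm{GG^T - \esp GG^T} = \max\bigl(s_{\max}(G)^2 - n,\; n - s_{\min}(G)^2\bigr),
\end{equation*}
and it suffices to bound the $L^p$-norm of each of these two nonnegative quantities separately.

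The first ingredient I would invoke is Gordon's inequality $\esp s_{\max}(G) \le \sqrt{n} + \sqrt{d}$ (together with $\esp s_{\min}(G) \ge \sqrt{n} - \sqrt{d}$ when $n \ge d$). The second is Borell's Gaussian concentration applied to the $1$-Lipschitz functionals $G \mapsto s_{\max}(G)$ and $G \mapsto s_{\min}(G)$, which in its sharp form yields $\|s_{\max}(G) - \esp s_{\max}(G)\|_{L^{2p}} \le \sqrt{2p}$ and likewise for $s_{\min}$. The key step is then to factor $s_{\max}^2 - n = (s_{\max} - \sqrt{n})(s_{\max} + \sqrt{n})$ and apply Cauchy--Schwarz in $L^p$:
\begin{equation*}
\|(s_{\max}^2 - n)_+\|_{L^p} \le \|(s_{\max} - \sqrt{n})_+\|_{L^{2p}} \cdot \|s_{\max} + \sqrt{n}\|_{L^{2p}}.
\end{equation*}
Plugging in the two ingredients bounds the right-hand side by $(\sqrt{d} + \sqrt{2p})(2\sqrt{n} + \sqrt{d} + \sqrt{2p})$, which upon expansion produces exactly the structure $2\sqrt{dn} + d + c\sqrt{p}(\sqrt{n} + \sqrt{d}) + c'p$ from the statement. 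The identical factorization $(n - s_{\min}^2)_+ = (\sqrt{n} - s_{\min})_+(\sqrt{n} + s_{\min})$ handles the other side.

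The main obstacle is matching the stated constants $4$ and $2$: combining the two sides of the max cannot be done via the crude $\|\max(a,b)\|_{L^p} \le \|a\|_{L^p} + \|b\|_{L^p}$, since that would double the leading coefficients of $\sqrt{dn}$ and $d$, which is too loose. One has to argue more carefully, for instance by $\|\max(a, b)\|_{L^p}^p \le \|a\|_{L^p}^p + \|b\|_{L^p}^p$ and absorbing the resulting $2^{1/p} \le \sqrt{2}$ into the constants of the concentration terms, together with the optimal constant $\sqrt{2}$ in Borell's inequality. A minor case to dispatch is $d > n$, where $s_{\min}(G) = 0$ almost surely and $\|n - s_{\min}^2\|_{L^p} = n$ must be absorbed into $2\sqrt{dn} + d$; this is harmless because $n \le 2\sqrt{dn}$ in that regime.
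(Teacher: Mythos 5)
This proposition is not proved in the paper: it is recalled from \cite{cai2022non} ("We also recall the sharp bound \ldots shown in \cite{cai2022non}"), so there is no in-paper proof to compare against. Your overall route---diagonalize $GG^T-nI_d$, reduce to the extreme singular values, factor $s_{\max}^2-n=(s_{\max}-\sqrt{n})(s_{\max}+\sqrt{n})$, then combine Gordon's mean bounds with Borell--TIS concentration via Cauchy--Schwarz in $L^p$---is the standard and correct approach, and you correctly flag the only delicate point, namely how to take the maximum of the two tails without inflating the leading term $2\sqrt{dn}+d$.

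However, the fix you propose for that step does not work. Writing $\norm{\max(A,B)}_{L^p}\le(\norm{A}_{L^p}^p+\norm{B}_{L^p}^p)^{1/p}$ places the factor $2^{1/p}$ in front of the \emph{entire} bound, including $2\sqrt{dn}+d$, and this cannot be absorbed into $4\sqrt{p}(\sqrt{d}+\sqrt{n})+2p$: already at $p=2$ with $d$ large and $n\gg d$, the excess $(\sqrt{2}-1)\cdot2\sqrt{dn}$ dominates the entire error term. The correct way to avoid the loss is to take the maximum \emph{before} the Cauchy--Schwarz factorization. Set
\begin{equation*}
D:=\max\bigl((s_{\max}-\sqrt{n})_+,\;(\sqrt{n}-s_{\min})_+\bigr),
\end{equation*}
and note that $(s_{\max}^2-n)_+\le D(s_{\max}+\sqrt{n})$ while $(n-s_{\min}^2)_+\le D(\sqrt{n}+s_{\min})\le D(s_{\max}+\sqrt{n})$, so $\norm{GG^T-nI_d}\le D\,(s_{\max}+\sqrt{n})$. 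Now one applies Cauchy--Schwarz once, $\norm{\,\norm{GG^T-nI_d}\,}_{L^p}\le\norm{D}_{L^{2p}}\norm{s_{\max}+\sqrt{n}}_{L^{2p}}$. Since $\prob(D>\sqrt{d}+t)\le 2e^{-t^2/2}$ by a union bound over the two one-sided Gaussian concentration events, the extra factor of $2$ sits harmlessly inside the tail and still gives $\norm{D}_{L^{2p}}\le\sqrt{d}+\sqrt{2p}$ for $p\ge2$ (this uses $2\,p!\le p^p$). Combined with $\norm{s_{\max}+\sqrt{n}}_{L^{2p}}\le 2\sqrt{n}+\sqrt{d}+\sqrt{2p}$, one gets $2\sqrt{dn}+d+2\sqrt{2p}(\sqrt{d}+\sqrt{n})+2p$, which is within the stated bound since $2\sqrt{2}\le4$. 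As written, your argument has a genuine gap at this combining step; with the max-then-factor ordering it closes.
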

Note that
\begin{align*}
    \esp \norm{\text{Diag}(GG^T)-\esp GG^T}^p=\esp \max_{i \in [d]}\left(\sum_{j \in [n]}(g^2_{ij}-1)\right)^p.
\end{align*}
Bernstein's Inequality \cite{chafai2012interactions} implies then that 
\begin{align*}
    ( \esp \norm{\text{Diag}(GG^T)-\esp GG^T}^p)^{1/p} \lesssim \sqrt{p n}+p.
\end{align*}
Consequently, we end this subsection with a corollary for the off-diagonal part.
\begin{coro}\label{coro norm of off-diagonal standard}
    Let $G$ be a $d\times n$ Gaussian matrix with i.i.d standard Gaussian entries. Then, for any $p \ge 2$ we have
    \begin{align*}
        (\esp \norm{\Delta (GG^T)}^p)^{1/p} \le 2\sqrt{dn}+d+C\sqrt{p}(\sqrt{d}+\sqrt{n})+C'p.
    \end{align*}
\end{coro}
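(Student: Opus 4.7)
The plan is to combine Proposition \ref{propo standard Gaussian bound} with the Bernstein-type diagonal bound stated just above through a single Minkowski step in $L^p$. The key observation is that $\esp GG^T = n\,I_d$ is diagonal, so its off-diagonal part is zero. Consequently,
\[
\Delta(GG^T) = \bigl(GG^T - \esp GG^T\bigr) - \bigl(\text{Diag}(GG^T) - \esp GG^T\bigr),
\]
because subtracting the diagonal part of a centered matrix exactly recovers its off-diagonal part.

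Next, I would apply Minkowski's inequality in $L^p(\prob)$ with respect to the operator norm to obtain
\[
(\esp \norm{\Delta(GG^T)}^p)^{1/p} \le (\esp \norm{GG^T - \esp GG^T}^p)^{1/p} + (\esp \norm{\text{Diag}(GG^T) - \esp GG^T}^p)^{1/p}.
\]
The first summand is controlled by Proposition \ref{propo standard Gaussian bound}, giving $2\sqrt{dn} + d + 4\sqrt{p}(\sqrt{d}+\sqrt{n}) + 2p$. The second summand is controlled by the Bernstein estimate quoted just before the corollary, giving a bound of order $\sqrt{pn} + p$. Adding the two estimates and absorbing the stray $\sqrt{pn}$ into $\sqrt{p}\,\sqrt{n}$ and the stray $p$ into the $2p$ term produces the stated form with enlarged universal constants $C$ and $C'$.

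Honestly, there is no real obstacle here; this is a bookkeeping corollary whose only purpose is to package the two previously cited estimates in the exact form needed downstream, where the argument of Section \ref{section proofs} will compare $\esp \text{Tr}(\Delta XX^T)^p$ to a quantity involving $\esp \norm{\Delta(GG^T)}^p$ for a reduced-dimension Gaussian matrix $G$. The only mild care is to note that we apply Minkowski for the operator norm (a genuine norm on matrices), so the triangle inequality propagates to $L^p(\prob)$ without any extra constant.
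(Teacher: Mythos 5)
Your proof is correct and is precisely the (unwritten) argument the paper intends: decompose $\Delta(GG^T)=(GG^T-\esp GG^T)-(\mathrm{Diag}(GG^T)-\esp GG^T)$ using that $\esp GG^T=nI_d$ is diagonal, then apply the $L^p$ triangle inequality together with Proposition~\ref{propo standard Gaussian bound} and the Bernstein estimate for the diagonal. The paper states the two ingredients and simply says ``consequently,'' so you have filled in exactly the intended one-line bookkeeping step.
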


%----------------------------------------------

\subsection{The diagonal part}\label{sub diagonal}
In this section, the main result is the following.
\begin{thm}\label{thm diagonal}
For any $p \ge 2$, we have
\begin{align*}
\left(\esp \emph{Tr}[\emph{Diag}(XX^T)-\esp XX^T]^p\right)^{1/p} \asymp \sqrt{p}\overline{\sigma}_p+pb_p^2.
\end{align*}
\end{thm}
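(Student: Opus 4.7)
The matrix $M := \text{Diag}(XX^T) - \esp XX^T$ is diagonal with $(i,i)$-entry
\begin{align*}
Z_i := \sum_{j \in [n]} b_{ij}^2(g_{ij}^2 - 1),
\end{align*}
so, for even $p$ (which suffices for the Schatten-norm applications),
\begin{align*}
\esp \text{Tr}(M^p) = \sum_{i \in [d]} \esp Z_i^p = \sum_{i \in [d]} \esp |Z_i|^p.
\end{align*}
The problem thus reduces to a sharp two-sided estimate of $\esp |Z_i|^p$ for each row $i$, followed by summation over $i \in [d]$.

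For fixed $i$, $Z_i$ is a sum of $n$ independent centered sub-exponential variables; since $g_{ij}^2-1$ has $\psi_1$-Orlicz norm of order one, the summand $b_{ij}^2(g_{ij}^2-1)$ has $\psi_1$-norm $\lesssim b_{ij}^2$. Bernstein's inequality (equivalently, the scalar Hanson--Wright bound), integrated against $p\,t^{p-1}\,dt$, produces the two-sided moment estimate
\begin{align*}
(\esp |Z_i|^p)^{1/p} \asymp \sqrt{p}\Bigl(\sum_{j \in [n]} b_{ij}^4\Bigr)^{1/2} + p \max_{j \in [n]} b_{ij}^2.
\end{align*}
The upper direction is immediate from integrating the Bernstein tail. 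For the lower direction, I would compare $Z_i$ with a centered Gaussian of matching variance $2\sum_j b_{ij}^4$ to extract the first term in the sub-Gaussian regime $p \lesssim \sum_j b_{ij}^4 / \max_j b_{ij}^4$, and isolate the heaviest atom $b_{ij_0}^2(g_{ij_0}^2-1)$ with Paley--Zygmund to recover the second term in the opposite regime.

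Raising to the $p$-th power, using the elementary equivalence $(a+b)^p \asymp a^p+b^p$ (with $2^p$-factors absorbed into the constants hidden by $\asymp$), and summing over $i$, I recognize $\sum_i \bigl(\sum_j b_{ij}^4\bigr)^{p/2} = \overline{\sigma}_p^p$ and $\sum_i \max_j b_{ij}^{2p} = b_p^{2p}$, so that
\begin{align*}
\esp \text{Tr}(M^p) \asymp (c\sqrt{p}\,\overline{\sigma}_p)^p + (cp\,b_p^2)^p,
\end{align*}
and taking $p$-th roots yields the claimed $\sqrt{p}\,\overline{\sigma}_p + p\,b_p^2$.

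The main obstacle is the matching lower bound on $\esp |Z_i|^p$: the upper direction is a routine Bernstein application, but the lower bound requires carefully separating the quadratic Gaussian chaos into its sub-Gaussian regime (where the Gaussian term $\sqrt{p}(\sum_j b_{ij}^4)^{1/2}$ dominates) and its sub-exponential tail regime (where a single heavy atom $p\max_j b_{ij}^2$ dominates), and verifying that the constants obtained in the two regimes can be combined without losing one term to the other.
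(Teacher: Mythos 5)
Your proposal follows the same skeleton as the paper: diagonalize $M$, reduce to per-row moment estimates of $Z_i=\sum_j b_{ij}^2(g_{ij}^2-1)$, Bernstein for the upper bound, sum in $\ell^p$ over $i$, and a two-pronged lower bound isolating the sub-Gaussian and heavy-atom regimes. The upper bound and the Gaussian-comparison lower bound for the $\sqrt{p}\,\overline{\sigma}_p$ term are essentially as in the paper (which invokes a two-sided tail estimate for $Z_i$ from \cite{zhang2020non} to lower-bound its moments by those of a Gaussian of variance $\asymp\sum_j b_{ij}^4$).

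The one place where you diverge, and where your argument as sketched does not close, is the $p\,b_p^2$ lower bound. You propose to ``isolate the heaviest atom with Paley--Zygmund''. This requires showing that the remainder $R=Z_i-b_{ij_0}^2(g_{ij_0}^2-1)$ does not cancel the heavy atom on the event where that atom is large; this is not automatic and fails without further work whenever the variance of $R$ is much larger than $p\,b_{ij_0}^2$, which is exactly the regime you need to handle to get the lower bound \emph{unconditionally} (both terms must hold simultaneously, since the conclusion is a sum, not a maximum over cases). The paper avoids this case analysis entirely via Lemma \ref{joint moments Gaussians lemma}: since $\esp(g^2-1)^m\ge 0$ for all $m\ge 0$, every term in the multinomial expansion of $\esp Z_i^p$ has non-negative expectation after factoring over the independent columns $j$, so one may simply discard all cross terms and keep only $\esp\,b_{ij_0}^{2p}(g_{ij_0}^2-1)^p$, then invoke the double-factorial lower bound $(\esp(g^2-1)^p)^{1/p}\gtrsim p$. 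This is the key idea your sketch is missing. A smaller point: the step ``$(a+b)^p\asymp a^p+b^p$ with $2^p$-factors absorbed into $\asymp$'' is imprecise as stated, since those factors are not absolute constants; it only becomes a harmless $O(1)$ after taking $p$-th roots, which is in fact what you do, but the paper's cleaner route is the $\ell^p(\mathbb{R}^d)$ triangle inequality applied to the vectors $\bigl(\sqrt{p}(\sum_j b_{ij}^4)^{1/2}\bigr)_i$ and $\bigl(p\max_j b_{ij}^2\bigr)_i$.
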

\begin{proof}
For the upper bound, note that
\begin{align*}
\esp \text{Tr}(\text{Diag}(XX^T)-\esp XX^T)^p=\sum_{i \in [d]}\esp \left(\sum_{j \in [n]}b_{ij}^2(g_{ij}^2-1)\right)^p.
\end{align*}
Since $g_{ij}^2-1$ are independent, centered, and subexponential, we can use Bernstein's Inequality to deduce that
\begin{align*}
\mathbb{P}\left(\left|\sum_{j \in [n]}b_{ij}^2(g_{ij}^2-1)\right| \ge t\right) \le 2\exp\left(-c\min\left\{\frac{t^2}{a^2},\frac{t}{b}\right\}\right),
\end{align*}
where
\begin{align*}
&a=\sum_{j \in [n]}b_{ij}^4;\\
&b=\max_{j \in [n]}b_{ij}^2,
\end{align*}
hence
\begin{align*}
\left[\esp \left(\sum_{j \in [n]}b_{ij}^2(g_{ij}^2-1)\right)^p\right]^{1/p} \lesssim \sqrt{p} \left(\sum_{j \in [n]}b_{ij}^4\right)^{1/2}+p\max_{j \in [n]}b_{ij}^2.
\end{align*}
We then have
\begin{align*}
\left[\esp \text{Tr}(\text{Diag}(XX^T)-\esp XX^T)^p\right]^{1/p} &\lesssim \left[\sum_{i \in [d]}\left(\sqrt{p} \left(\sum_{j \in [n]}b_{ij}^4\right)^{1/2}+p\max_{j \in [n]}b_{ij}^2.\right)^p\right]^{1/p}\\
&\lesssim \sqrt{p}\overline{\sigma}_p+pb_p^2,
\end{align*}
where the last inequality follows by the triangle inequality.

For the lower bound, let $j_i$ be the index such that
\begin{align*}
\max_{j \in [n]}b_{ij}=b_{ij_i}.
\end{align*}
Since the joint moments of $g$ and $g^2-1$ are always positive, we deduce that
\begin{align*}
\left[\esp \text{Tr}(\text{Diag}(XX^T)-\esp XX^T)^p\right]^{1/p} \ge  \left[\sum_{i \in [d]}\esp b_{ij_i}^{2p}(g_{ij_i}^2-1)^p\right]^{1/p}.
\end{align*}
Now, the estimate
\begin{align*}
\left(\esp (g^2-1)^p\right)^{1/p} \gtrsim p,
\end{align*}
that follows the lower bound on the double factorial (see Lemma 5.2 in \cite{cai2022non}) implies that
\begin{align*}
\left[\esp \text{Tr}(\text{Diag}(XX^T)-\esp XX^T)^p\right]^{1/p} \gtrsim p\left(\sum_{i \in [d]}\max_{j \in [n]}b_{ij}^{2p}\right)^{1/p}=pb_p^2.
\end{align*}
On the other hand, Theorem 6 in \cite{zhang2020non} yields that $Z=\sum_{j \in [n]}b_{ij}^2(g_{ij}^2-1)$ satisfies
\begin{align*}
&\exp(-Ct^2/a)\lesssim \prob(Z \ge t);\\
&\exp(-Ct^2/a) \lesssim \prob(Z \le -t),
\end{align*}
for all $t \ge 0$. Therefore, its moments are lower bounded by the ones of the Gaussian $h \sim N(0, a)$, hence
\begin{align*}
(\esp |Z|^p)^{1/p} \gtrsim \sqrt{ap},
\end{align*}
so we conclude that
\begin{align*}
\left[\esp \text{Tr}(\text{Diag}(XX^T)-\esp XX^T)^p\right]^{1/p} \gtrsim \sqrt{p}\overline{\sigma}_p.
\end{align*}
\end{proof}

%-------------------------------------------------------

\subsection{The off-diagonal part}\label{sub off-diagonal}

The proof of the bounds for the off-diagonal part follows the moment method. First, we open the trace so that
\begin{align*}
\esp \text{Tr}(\Delta XX^T)^p&=\sum_{u \in [d]^p} \esp \prod_{k=1}^p (XX^T)_{u_ku_{k+1}} \mathbf{1}_{u_k \ne u_{k+1}}\\
&=\sum_{u \in [d]^p}\sum_{v \in [n]^p} \esp \prod_{k=1}^p X_{u_kv_k}X_{u_{k+1}v_k}\mathbf{1}_{u_k \ne u_{k+1}},
\end{align*}
where $u_{p+1}:=u_1$. We view the path $u_1 \to v_1 \to u_2 \to \cdots \to u_p \to v_p \to u_1$ as a cycle in the complete bipartite graph over $[d]^{(l)} \sqcup [n]^{(r)}$, where $(l)$ and $(r)$ indicate left and right vertices (we will remove the indexes if the context is clear). For a path $(u,v)$, we define its shape $s(u,v)$ as relabelling its vertices in order of appearance. For instance, the path
\begin{align*}
3 \to 2' \to 4 \to 1' \to 3 \to 1' \to 4 \to 5' \to 3 
\end{align*}
has shape
\begin{align*}
1 \to 1' \to 2 \to 2' \to 1 \to 2' \to 2 \to 3' \to 1.
\end{align*}
Note that each edge $u_kv_{k}$ and $u_{k+1}v_k$ must appear at least twice in the path $(u,v)$, by the independence of the Gaussian r.v. and symmetry. Call the shapes that satisfy this \textit{even}. Let then $\mathcal{S}$ be the set of even shapes $s=(u,v)$ such that $u_k \ne u_{k+1}$ for all $k=1,...,p$. Moreover, the product
\begin{align}
    L(s):=\esp \prod_{k=1}^p g_{u_kv_k}g_{u_{k+1}v_k}
\end{align}
only depends on the shape of $(u,v)$, therefore we have
\begin{align*}
\esp \text{Tr}(\Delta XX^T)^p=\sum_{s \in \mathcal{S}}L(s)\sum_{\substack{(u,v) \in [d]^p\times [n]^p\\ s(u,v)=s}} \prod_{k=1}^p b_{u_kv_k}b_{u_{k+1}v_k}.
\end{align*}
Let $(m_1,m_2)=(m_1(s),m_2(s))$ be the quantity of right and left vertices that appear in the shape $s$. The key proposition to prove Theorem \ref{main theorem} is to bound
\begin{align}\label{defi W(s)}
    W(s):=\sum_{\substack{(u,v) \in [d]^p\times [n]^p\\ s(u,v)=s}} \prod_{k=1}^p b_{u_kv_k}b_{u_{k+1}v_k}
\end{align}
according to the number of vertices visited by the path. 
\begin{propo}\label{propo for the shapes and operator norm}
    Assume $\sigma_*=1$. If $\beta_\infty \le 1$, we have
    \begin{align*}
        W(s) \le \left[d \left(\frac{\sigma_\infty}{\sigma_C}\right)^{2m_1} \sigma_C^{2(m_2-1)}\right] \wedge \left[n \left(\frac{\sigma_\infty}{\sigma_C}\right)^{2(m_1-1)} \sigma_C^{2m_2}\right].
    \end{align*}
    Otherwise, $\beta_\infty>1$ and we have
    \begin{align*}
        W(s) \le \left[d \tilde{\sigma}_\infty^{2m_1} \sigma_C^{2(m_2-1)}\right] \wedge \left[n \tilde{\sigma}_\infty^{2(m_1-1)} \sigma_C^{2m_2}\right].
    \end{align*}
\end{propo}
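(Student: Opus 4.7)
The plan is to prove the proposition by a spanning-tree contraction argument on the simple bipartite graph $H$ underlying the shape $s$. Since $\sigma_* = 1$ gives $b_{ij} \le 1$ and each distinct edge of the walk is traversed at least twice, we begin with the reduction
\[
W(s) \;\le\; \sum_{\substack{u_1,\ldots,u_{m_2} \in [d] \text{ distinct} \\ v_1,\ldots,v_{m_1} \in [n] \text{ distinct}}} \prod_{(i,j)\in E(H)} b_{u_i v_j}^2,
\]
where $H$ has $m_2$ left and $m_1$ right vertices, and every right vertex has $H$-degree at least $2$ because the walk constraint $u_k \ne u_{k+1}$ forces each right vertex to be entered and left from distinct left vertices.

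Next I would fix a spanning tree $T$ of $H$ and root it at a left vertex $\ell_0$; rooting at a right vertex yields the $n$-prefactor bound by an entirely symmetric argument, and the stated estimate is the minimum of the two. For each right vertex $r$ I designate a \emph{partner} left vertex $p(r)$: a tree-child of $r$ if any exists, and otherwise (when $r$ is a leaf of $T$) the other endpoint of some non-tree edge incident to $r$ --- such a back-edge exists because $r$ has $H$-degree at least $2$. Any extra non-tree edges that are not chosen as partner back-edges are simply discarded by $b_{ij}^2 \le 1$.

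The heart of the argument is a post-order iterated summation on $T$ in which each right vertex is summed jointly with its partner. Three elementary estimates do all the work: for a non-partner non-root left leaf $\ell$ with tree-parent $r$ one uses $\sum_\ell b_{\ell r}^2 \le \sigma_C^2$; for a right vertex $r$ (tree-parent $\ell$) paired with a tree-child $\ell'$ one uses $\sum_{r,\ell' \ne \ell} b_{\ell r}^2 b_{\ell' r}^2 \le \sigma_\infty^2$; and for a right tree-leaf $r$ (tree-parent $\ell$) paired with a back-edge endpoint $\ell''$ one uses $\sum_r b_{\ell r}^2 b_{\ell'' r}^2 \le \tilde{\sigma}_\infty^2$. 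The root $\ell_0$ is summed freely, producing a factor $d$. Writing $k$ for the number of right vertices paired via a tree-child, there are $m_2 - 1 - k$ non-partner left vertices, and multiplying all contributions yields
\[
W(s) \;\le\; d \cdot \sigma_\infty^{2k} \, \tilde{\sigma}_\infty^{2(m_1 - k)} \, \sigma_C^{2(m_2 - 1 - k)}.
\]
This bound equals $\beta_\infty^{2(m_1 - k)}$ times the first stated target and $\beta_\infty^{-2k}$ times the second, so the case $\beta_\infty \le 1$ immediately gives the first bound and $\beta_\infty > 1$ gives the second.

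The main obstacle will be to justify that this iterated summation genuinely decouples into the three per-vertex estimates above. When one sums a right vertex $r$ jointly with its tree-child partner $\ell'$, the partner's own subtree must have already been contracted to a numerical factor independent of the label of $\ell'$; this works precisely because each of the three elementary bounds is uniform in the ``fixed parent'' coordinate, so after processing $\ell'$'s subtree the residual factor is a constant and the subsequent $(r,\ell')$-sum reduces to $\sigma_\infty^2$ as written. The second delicate point is making sure every right tree-leaf supplies a genuine back-edge, which is the unique place the off-diagonal hypothesis $u_k \ne u_{k+1}$ is used. Finally, the distinctness constraint on the labels can be harmlessly relaxed when applying each of the three elementary estimates because all $b_{ij}^2$ are non-negative.
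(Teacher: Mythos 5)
Your proof is correct and follows essentially the same spanning-tree contraction argument as the paper: root at a vertex to produce the $d$ (resp.\ $n$) prefactor, pair each right vertex with a left partner to collect a $\sigma_\infty$ factor (tree-child) or a $\tilde{\sigma}_\infty$ factor (back-edge when the right vertex is a tree-leaf), and charge every remaining left vertex a $\sigma_C^2$. Your single-parameter bound $d\,\sigma_\infty^{2k}\tilde{\sigma}_\infty^{2(m_1-k)}\sigma_C^{2(m_2-1-k)}$ neatly unifies the paper's Case I / Case II dichotomy (with $k=m_1-|V|$), but the underlying peeling and per-vertex estimates are identical to those in the published proof.
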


Let us prove Theorem \ref{main theorem} given Proposition \ref{propo for the shapes and operator norm}.
\begin{proof}[Proof of Theorem \ref{main theorem}.]
Assume $\sigma_*=1$ (by homogeneity) and $\beta_\infty \le 1$. Let
\begin{align*}
    &a:=\frac{\sigma_\infty}{\sigma_C};\\
    &b:=\sigma_C.
\end{align*}
Then, using the first bound on Proposition \ref{propo for the shapes and operator norm}, we have
\begin{align*}
    \esp \text{Tr}(\Delta XX^T)^p \le d\sum_{s \in \mathcal{S}}L(s) \left(\frac{\sigma_\infty}{\sigma_C}\right)^{2m_1}\sigma_C^{2(m_2-1)}.
\end{align*}
On the other hand, for a standard Gaussian $r_2 \times r_1$ matrix $G$, we have
\begin{align*}
    \esp \text{Tr}(\Delta GG^T)^p=\sum_{s \in \mathcal{S}}L(s) \frac{r_1!}{(r_1-m_1)!}\frac{r_2!}{(r_2-m_2)!},
\end{align*}
for any $r_1,r_2>p/2$ (see \cite{cai2022non}). In particular, if $r_1=\lceil a^2 \rceil+p/2$ and $r_2=\lceil b^2 \rceil+p/2$, we have 
\begin{align*}
    \frac{r_1!}{(r_1-m_1)!} \ge r_1\cdots (r_1-m_1+1)^{m_1} \ge a^{2m_1}, 
\end{align*}
and
\begin{align*}
    \frac{r_2!}{(r_2-m_2)!} \ge r_2 b^{2(m_2-1)}.
\end{align*}
Hence
\begin{align*}
    \esp \text{Tr}(\Delta XX^T)^p \le \frac{d}{r_2}\esp \text{Tr}(\Delta GG^T)^p \le d\esp \norm{\Delta GG^T}^p.
\end{align*}
Now we estimate the latter by Corollary \ref{coro norm of off-diagonal standard} so that
\begin{align*}
    (\esp \norm{\Delta GG^T}^p)^{1/p} \le 2\sqrt{r_1r_2}+r_2+C\sqrt{p}(\sqrt{r_1}+\sqrt{r_2})+C'p.
\end{align*}
Together with Theorem \ref{thm diagonal}, we deduce that
\begin{align*}
    \esp \norm{XX^T-\esp XX^T}\le d^{\frac{1}{p}}\left\{2\sigma_\infty+\sigma_C^2+C\sqrt{p}\left(\sigma_C+\frac{\sigma_\infty}{\sigma_C}+\bar{\sigma}_p\right)+Cpb_p^2\right\}.
\end{align*}
Choose $p=\lceil \alpha \log d\rceil $. Since $\beta_\infty \le 1$, we have that
\begin{align*}
    \bar{\sigma}_p \le d^{\frac{1}{p}}\bar{\sigma}_\infty \le d^{\frac{1}{p}}\frac{\sigma_\infty}{\sigma_C}.
\end{align*}
Moreover, $b_p \le d^{\frac{1}{2p}}b_\infty$, thus
\begin{align*}
    \esp \norm{XX^T-\esp XX^T} \le e^{\frac{1}{\alpha}}\left\{2\sigma_\infty+\sigma_C^2+Ce^{\frac{1}{\alpha}}\sqrt{\alpha \log d}\left(\sigma_C+\frac{\sigma_\infty}{\sigma_C}\right)+C\alpha e^{\frac{1}{2\alpha}}\log d \right\}.
\end{align*}
Finally, set $1+\varepsilon=e^{\frac{1}{\alpha}}$, hence
\begin{align*}
    \alpha=\frac{1}{\log(1+\varepsilon)},
\end{align*}
and we get
\begin{align*}
    \esp \norm{XX^T-\esp XX^T}\le (1+\varepsilon)\left\{2\sigma_\infty+\sigma_C^2+C(\varepsilon)\sqrt{\log d}\left(\sigma_C+\frac{\sigma_\infty}{\sigma_C}\right)+C^2(\varepsilon)\log d\right\}.
\end{align*}
This gives the upper bound with $\log d$. The second bound in Proposition \ref{propo for the shapes and operator norm} yields the general bound for $\beta_\infty \le 1$. The case $\beta_\infty>1$ follows similarly. Indeed, we now set $(a,b)$ to be
\begin{align*}
    &a=\tilde{\sigma}_\infty;\\
    &b=\sigma_C,
\end{align*}
and then the previous proof follows straightforwardly. 

%The result then follows readily by choosing $p=\alpha \log n$ and proceeding as in \cite{bandeira2016sharp}. The general result follows by taking the second bound on Proposition \ref{propo for the shapes and operator norm}.
    
\end{proof}

Now we prove Proposition \ref{propo for the shapes and operator norm}.
\begin{proof}[Proof of Proposition \ref{propo for the shapes and operator norm}]
To simplify the notation, for a graph $G$, we will denote $e \in G$ if an edge $e$ belongs to $E(G)$, $v \in G$ if $v \in V(G)$ and $G'=G\setminus\{v\}$ is the subgraph of $G$ induced by the vertices $V(G)\setminus \{v\}$.  We use a similar notation to $G\setminus\{e\}$ and an edge $e \in E(G)$.

Given a shape $s \in \mathcal{S}$, we define a bipartite graph $G$ over $[m_2] \sqcup [m_1]$ so that $E(G)=\{(u_kv_k):k \in [p]\}$. Here, $[m_2]$ denotes the left vertices and $[m_1]$ denotes the right vertices. Let $k_e$ be the number of times each edge $e \in E(G)$ is traversed by the shape $s$, then $\sum_{e}k_e=2p=|k|$. According to \eqref{defi W(s)}, we get an alternative expression for $W(s)$:
\begin{align*}
    W(s)=\sum_{w_1 \ne \cdots \ne w_{m_2}}\sum_{t_1 \ne \cdots \ne t_{m_1}} \prod_{e=ij \in E(G)}b_{w_it_j}^{k_e}=:W^k(G),
\end{align*}
where the notation $w_1 \ne \cdots \ne w_{m_2}$ means that all $w_k$ are different, similarly for $t_k$. Note that, by the assumption on $s \in \mathcal{S}$, every right vertex has at least 2 neighbors. Now, fix $u_1=w_1=z \in [d]$ and define the following first-time arrivals:
\begin{align*}
    &i_1(k):=\inf\{l: u_l=k\}; \ k=2,...,m_2;\\
    &i_2(k):=\inf\{l:v_l=k\}; \ k=1,...,m_1.
\end{align*}
Let also $e^{(1)}_k=u_{i_1(k)}v_{i_1(k)-1}$ and $e^{(2)}_k=u_{i_2(k)}v_{i_2(k)}$. Then all these $m_1+m_2-1$ edges are distinct, and the subgraph $H$ generated by them is a spanning tree of $G$. 

The crucial distinction to \cite{cai2022non} is that we want to preserve the property that every right vertex has at least two neighbors. Call this property $\mathcal{P}$. Let us divide in two cases whether this is true. 

\textit{Case I.} Suppose the tree $H$ satisfies property $\mathcal{P}$. Assume $v,v'$ are extreme right vertices, that is,
\begin{align*}
    d(v,v')=\max_{r,r' \in [m_1]} d(r,r').
\end{align*}
(In case $m_1=1$, the result is trivial). Then $v$ has exactly one neighbor $u \in [m_2]$ such that $|N(u)| \ge 2$ and it satisfies
\begin{align*}
    d(v',u)=d(v',v)-1,
\end{align*}
that is, the unique path from $v'$ to $v$ passes through $u$. Indeed, if there are two of such vertices $u,u'$ and $u'$ is connected to both $v$ and a different $v''$, we would have that
\begin{align*}
    d(v',v'')=d(v',v)+d(v,v'')=d(v',v)+2,
\end{align*}
which contradicts the maximal distance of $v$ and $v'$. Therefore, if $L(v)=\{u \in N(v): |N(u)|=1\}\cup \{v\}$ we have that the graph $H'=H\setminus L(v)$ is still a tree with the property $\mathcal{P}$. Without loss of generality, we can assume that $v=m_1$. Since $\sigma_*=1$ and $k_e \ge 2$ for all $e \in G$, we have
\begin{align*}
    W^k(G) &\le d\sum_{w_2 \ne \cdots \ne w_{m_2}}\sum_{t_1 \ne \cdots \ne t_{m_1}} \prod_{e=ij \in E(H)}b_{w_it_j}^2 \\
    &\le d\left(\sum_{w_2 \ne \cdots \ne w_{m_2}}\sum_{t_1 \ne \cdots \ne t_{m_1-1}}\prod_{e=ij \in E(H')}b_{w_it_j}^2\right)\max_{w \in [d]}\sum_{j \in [n]}b_{wj}^2 \left(\sum_{l \in [d]:l \ne w}b_{lj}^2\right)^{|N(m_1)|-1}.
\end{align*}
For the second term, we further estimate
\begin{align*}
   \max_{w \in [d]}\sum_{j \in [n]}b_{wj}^2 \left(\sum_{l \in [d]:l \ne w}b_{lj}^2\right)^{|N(m_1)|-1} \le \sigma_\infty ^2\sigma_C^{2(|N(m_1)|-2)}.
\end{align*}
We then proceed by induction over the right vertices as we did for $H$. Here, induction is justified as $H'$ is still in case I. In particular, that yields
\begin{align*}
    W^k(G) \le d\sigma_\infty^{2m_1}\sigma_C^{2\sum_{v \in [m_1]}(|N(v)|-2)}.
\end{align*}
Since $|E(H)|=\sum_{v \in [m_1]}|N(v)|=m_1+m_2-1$, we get that
\begin{align*}
    W^k(G) \le d \left(\frac{\sigma_\infty}{\sigma_C}\right)^{2m_1}\sigma_C^{2(m_2-1)}.
\end{align*}

\textit{Case II.} In case the tree $H$ does not satisfy property $\mathcal{P}$, we then add for each $v \in H \cap [m_1]$ with $|N(v)|=1$ in $H$ one extra edge $uv \in E(G)$ from $G$. This creates a graph $H'$ that is not a tree, but it satisfies property $\mathcal{P}$. 

Let 
\begin{align*}
    V=\{v \in H \cap [m_1]: |N(v)|=1 \text{ in }H\}.
\end{align*} 
Then for each $v \in V$ we have $|N(v)|=2$ in $H'$ and $v$ belongs to a cycle in $H'$. In particular, we can remove $v$ from $H'$ and $H''=H'\setminus \{v\}$ is still connected. Assume $v=m_1$, then we have
\begin{align*}
    W^k(G) \le d\left(\sum_{w_2 \ne \cdots \ne w_{m_2}}\sum_{t_1 \ne \cdots \ne t_{m_1-1}} \prod_{e=ij \in E(H'')}b_{w_it_j}^2\right)\max_{i \ne l \in [d]}\sum_{j \in [d]}b_{ij}^2b_{lj}^2.
\end{align*}
We deduce that
\begin{align*}
    W^k(G) \le d\tilde{\sigma}_\infty^2 \sum_{w_2 \ne \cdots \ne w_{m_2}}\sum_{t_1 \ne \cdots \ne t_{m_1-1}} \prod_{e=ij \in E(H'')}b_{w_it_j}^2.
\end{align*}
By induction, we have
\begin{align*}
    W^k(G) \le d\tilde{\sigma}_\infty^{2|V|}\sum_{w_2 \ne \cdots \ne w_{m_2}}\sum_{t_1 \ne \cdots \ne t_{m_1-|V|}} \prod_{e=ij \in E(H\setminus V)}b_{w_it_j}^2.
\end{align*}
By assumption, $H\setminus V=H'\setminus V$ is a tree satisfying property $\mathcal{P}$. Therefore, case $I$ implies that
\begin{align*}
    W^k(G) \le d \tilde{\sigma}_\infty^{2|V|}\left(\frac{\sigma_\infty}{\sigma_C}\right)^{2(m_1-|V|)}\sigma_C^{2(m_2-1)}.
\end{align*}
By definition of $\beta_\infty$, we have
\begin{align*}
    W^k(G) \le d\beta_\infty^{2|V|} \left(\frac{\sigma_\infty}{\sigma_C}\right)^{2m_1} \sigma_C^{2(m_2-1)}.
\end{align*}
If $\beta_\infty \le 1$, we choose $|V|=0$, otherwise we choose $|V|=m_1$. A straightforward computation yields the bounds of Proposition \ref{propo for the shapes and operator norm} with factor $d$.

For the second bound, instead of fixing $u_1=w_1=z$, we fix $v_1=t_1=z$. Define the following first-time arrivals:
\begin{align*}
    &i_1(k):=\inf\{l: u_l=k\}; \ k=1,...,m_2;\\
    &i_2(k):=\inf\{l:v_l=k\}; \ k=2,...,m_1,
\end{align*}
and let also $e^{(1)}_k=u_{i_1(k)}v_{i_1(k)-1}$ and $e^{(2)}_k=u_{i_2(k)}v_{i_2(k)}$. The same argument done before implies that these $m_1+m_2-1$ edges are distinct, and the subgraph $H$ generated by them is a spanning tree of $G$. We then repeat the proof as in the first bound, but now the first choice of vertex $v_1$ will contribute with a factor of $n$.
\end{proof}

\subsection{Proof of Theorem \ref{thm second main}}

To get the correct parameters for the Schatten norm, we must improve Proposition \ref{propo for the shapes and operator norm} and the bound on $W(s)$. The main proposition of this subsection is the following.
\begin{propo}\label{propo for the shapes and the schatten norm}
For any shape $s \in \mathcal{S}$, if $\beta_p \le 1$, we have
\begin{align*}
W(s)\le d\sigma_*^{2p}\left\{\frac{\sigma_p}{\sigma_*\sigma_C}\right\}^{2m_1(s)}\left\{\frac{\sigma_C}{\sigma_*}\right\}^{2(m_2(s)-1)}.
\end{align*}
Otherwise $\beta_p>1$ and
\begin{align*}
W(s) \le d\sigma_*^{2p}\left\{\frac{\overline{\sigma}_p}{\sigma_*^2}\right\}^{2m_1(s)}\left\{\frac{\sigma_C}{\sigma_*}\right\}^{2(m_2(s)-1)}.   
\end{align*}
\end{propo}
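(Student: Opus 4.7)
The plan is to mirror the proof of Proposition \ref{propo for the shapes and operator norm} verbatim, preserving the graph-theoretic setup (bipartite graph $G$, spanning tree $H$ of first arrivals, Case~I versus Case~II dichotomy based on property $\mathcal{P}$, augmentation $H \to H'$ when needed, and inductive peel-off of extreme right vertices). The single structural modification is to replace each $\ell_\infty$-type ``max over vertex'' bound by its $\ell_p$-norm counterpart via the elementary power-mean inequality
$$\max_{i \in [d]} F_i \le \Bigl(\sum_{i \in [d]} F_i^{p/2}\Bigr)^{\!2/p}, \qquad F_i \ge 0,\ p \ge 2.$$
Applied to $F_i = A_i := \sum_{j,l} b_{ij}^2 b_{lj}^2$ it gives $\max_i A_i \le \sigma_p^2$; applied to $F_i = \sum_j b_{ij}^4$ it gives $\max_i \sum_j b_{ij}^4 \le \overline{\sigma}_p^2$.

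For Case~I (spanning tree satisfies $\mathcal{P}$), the peel of an extreme right vertex $v$ with anchor $w$ produces, as in the operator-norm proof, a factor bounded by $\sigma_C^{2(|N(v)|-2)} \cdot A_w$. Replacing $\max_w A_w \le \sigma_\infty^2$ by $\max_w A_w \le \sigma_p^2$ and iterating over all $m_1$ peels (collecting $\sum_v (|N(v)|-2) = m_2 - m_1 - 1$ powers of $\sigma_C$) yields the Case~I bound of the proposition.

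For Case~II (spanning tree fails $\mathcal{P}$), the augmented graph $H'$ is constructed as before. Each pendant right vertex $v \in V$ contributes, upon removal, $\sum_j b_{ij}^{k_1} b_{lj}^{k_2}$ where $i,l$ are its two left neighbors in $H'$. Using Cauchy--Schwarz $\sum_j b_{ij}^2 b_{lj}^2 \le \max_i \sum_j b_{ij}^4$ together with the power-mean upgrade above yields $\overline{\sigma}_p^2$ per pendant vertex; the multiplicity excess $k_1 + k_2 - 4 \ge 0$ is absorbed via the pointwise bound $b_{ij}^{k-2} \le (\max_j b_{ij}^{2p})^{(k-2)/(2p)}$ followed by a Hölder step over the left index $i$, which produces precisely the $b_p$-factor that enters $\beta_p$. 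Combining with Case~I applied to the residual tree $H \setminus V$ gives the intermediate estimate
$$W(s) \le d\,\sigma_*^{2p}\Bigl(\tfrac{\overline{\sigma}_p}{\sigma_* b_p}\Bigr)^{\!2|V|}\Bigl(\tfrac{\sigma_p}{\sigma_*\sigma_C}\Bigr)^{\!2(m_1-|V|)}\Bigl(\tfrac{\sigma_C}{\sigma_*}\Bigr)^{\!2(m_2-1)} = d\,\sigma_*^{2p}\Bigl(\tfrac{\sigma_p}{\sigma_*\sigma_C}\Bigr)^{\!2m_1}\Bigl(\tfrac{\sigma_C}{\sigma_*}\Bigr)^{\!2(m_2-1)}\beta_p^{2|V|},$$
after which the $\beta_p$-dichotomy ($|V|=0$ when $\beta_p \le 1$, $|V|=m_1$ when $\beta_p > 1$, exactly as in the operator-norm proof) delivers the two cases of the proposition.

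The main obstacle is the clean extraction of the $b_p$-denominator at each pendant vertex. The naive route---bounding $b_{ij}^{k-2} \le \sigma_*^{k-2}$ in the multiplicity-excess step as in Proposition \ref{propo for the shapes and operator norm}---would introduce an extra factor $(b_p/\sigma_*)^{2|V|} \ge 1$ in the interpolation, so the resulting dichotomy threshold would be $\beta_p \le \sigma_*/b_p$ rather than the claimed $\beta_p \le 1$. The Hölder step over the left index $i$ using the $(\max_j b_{ij}^{2p})^{(k-2)/(2p)}$ bound is the key refinement that recovers the correct $\beta_p$-threshold and matches the definition of $b_p$ precisely; once this pendant estimate is in place, the rest of the proof is a transparent rewriting of the operator-norm argument.
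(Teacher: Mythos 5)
Your proposal takes a genuinely different route from the paper's. The paper proves Proposition~\ref{propo for the shapes and the schatten norm} via the Lata\l{}a--van Handel--Youssef machinery: Lemma~\ref{lemma reduction to trees} reduces $W^k(G)$ to a spanning tree while preserving that right leaves carry edge multiplicity $\ge 4$, and Lemma~\ref{lemma after the reduction} (through Corollary~\ref{holder type inequality coro for trees}) is a nontrivial H\"older inequality over trees with exponents $p_v=|k|/k_v$ tied to the vertex multiplicities; the $\bar\sigma_p$ and $\sigma_p$ factors come out of those H\"older estimates, and a final $\ell_{|k|/k_v}\to\ell_{|k|/4}$ norm conversion supplies the $d$-prefactor. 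The sum over the row index $i\in[d]$ is kept alive throughout by H\"older rather than being reduced to a maximum. You instead re-run the Case~I/Case~II peeling argument of Proposition~\ref{propo for the shapes and operator norm} and upgrade maxima to $\ell_p$-norms after the fact.

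There is a real gap in Case~II. Case~I is fine but not new: once Proposition~\ref{propo for the shapes and operator norm} gives $W(s)\le d(\sigma_\infty/\sigma_C)^{2m_1}\sigma_C^{2(m_2-1)}$ for Case~I shapes, the trivial inequality $\sigma_\infty\le\sigma_p$ (your power-mean step) yields the Case~I estimate of the current proposition with no new work. The substance is Case~II, and there you assert, but do not perform, the ``H\"older step over the left index $i$'' that is supposed to convert the excess-multiplicity factor $(\max_j b_{ij}^{2p})^{(k-2)/(2p)}$ into the parameter $b_p$. That step is the whole difficulty: the pendant vertex's contribution $\sum_j b_{ij}^{k_1}b_{lj}^{k_2}$ shares the left indices $i,l$ with the residual tree $H\setminus V$, so isolating it requires a H\"older split whose exponents must be made mutually compatible across all $|V|$ pendant vertices and the residual Case~I sum at once. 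Moreover, since $b_p\ge\sigma_*$, your target per-vertex factor $\bar\sigma_p/(\sigma_* b_p)$ is strictly smaller than what the pointwise bound $b_{ij}^{k-2}\le\sigma_*^{k-2}$ produces; you are therefore claiming a genuine quantitative improvement over the operator-norm peel, and that improvement is exactly what the multiplicity-dependent exponents $p_v=|k|/k_v$ in Lemma~\ref{lemma after the reduction} are engineered to deliver. Until that H\"older bookkeeping is written out and checked to close on the stated $\beta_p$ threshold, this is a proof sketch with its central lemma missing, not a ``transparent rewriting'' of Proposition~\ref{propo for the shapes and operator norm}.
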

As soon as Proposition \ref{propo for the shapes and the schatten norm} is available, the proof of Theorem \ref{thm second main} follows similarly as the proof of Theorem \ref{main theorem} and the bound for the diagonal in Theorem \ref{thm diagonal}. 
%\begin{remark}
%Since $u_k \ne u_{k+1}$ for every $k$, we can write
%\begin{align*}
%    b_{u_kv_k}b_{u_{k+1}v_k}=\left(b_{u_kv_k}1_{\exists l \ne u_k:b_{lv_k} \ne 0}\right)\left(b_{u_{k+1}v_k}1_{\exists l \ne u_{k+1}:b_{lv_k} \ne 0}\right).
%\end{align*}
%Therefore, throughout the subsection, we will replace $b_{ij}$ by
%\begin{align*}
%    \bar{b}_{ij}=b_{ij}1_{\exists l \ne i: b_{lj} \ne 0}.
%\end{align*}
%\end{remark}

Proposition \ref{propo for the shapes and the schatten norm} follows the same argument shown in \cite{latala2018dimension}. On the other hand, we did not try to optimize the argument to our setting, instead, we prefer to prove it directly.

We start by the reduction to tree argument done in \cite{latala2018dimension} for $W^k(G)$. In this case, however, we want to keep track of the exponents for each right leaf that appears in the final reduction. We hence present the proof for completeness.
\begin{lemma}\label{lemma reduction to trees}
    Let $G$ be a graph generated by a shape $s \in \mathcal{S}$ and $k_e \ge 2$ for each $e \in E(G)$. Then, there exist $k_2',...,k_{m_1+m_2-1}' \ge 2$ such that $\sum_i k_i'=\sum_e k_e$ and
    \begin{align*}
        W^k(G) \le \max_{T \in \emph{span}(G)} W^{k'}(T), 
    \end{align*}
    where $\emph{span}(G)$ is the set of spanning trees of $G$. Moreover, the maximum can be taken such that whenever $T$ has a right leaf $v \in [m_1]$ with unique edge $e=uv \in T$ we have $k_e \ge 4$.
\end{lemma}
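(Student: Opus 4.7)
The strategy is to induct on the circuit rank $c(G) = |E(G)| - (m_1+m_2-1)$, eliminating one non-tree edge at a time by an AM--GM move on a cycle. If $c(G) = 0$, the graph is already a spanning tree and the statement is trivial. Otherwise, fix a spanning tree $T_0 \subseteq G$ and a non-tree edge $e_0 \in E(G)\setminus E(T_0)$; then $T_0\cup\{e_0\}$ contains a unique cycle $C$.

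The key pointwise inequality is
\[
\prod_{e \in C} b_e^{k_e} \;\le\; \max_{e^* \in C} b_{e^*}^{K_C}
\;\le\; \sum_{e^* \in C} b_{e^*}^{K_C},
\qquad K_C := \sum_{e\in C}k_e,
\]
which follows because each factor $b_e^{k_e}$ is dominated by $(\max_{e'\in C} b_{e'})^{k_e}$. Substituting into the definition of $W^k(G)$ and separating the cases according to which $e^*$ attains the maximum gives
\[
W^k(G) \;\le\; \sum_{e^* \in C} W^{k^{(e^*)}}\!\bigl(G_{e^*}\bigr),
\]
where $G_{e^*} = (G\setminus C) \cup \{e^*\}$ and $k^{(e^*)}$ agrees with $k$ outside $C$ and equals $K_C$ on $e^*$. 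Each $G_{e^*}$ has strictly smaller circuit rank, the total exponent is preserved, and all weights remain $\ge 2$ (since $K_C\ge 2$ and weights outside $C$ are untouched).

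Iterating shrinks the circuit rank to zero and lands on a spanning tree. To avoid accumulating a combinatorial constant and to keep the statement of the lemma as "$\le \max_T W^{k'}(T)$" with no prefactor, I would couple each AM--GM move with the tree-swap $T_0 \mapsto T_0\cup\{e_0\}\setminus\{e^*\}$, which is valid precisely because $e^*$ lies in the unique cycle of $T_0\cup\{e_0\}$; this is exactly the bookkeeping of \cite{latala2018dimension}. At termination, a single spanning tree $T$ is produced with weights $k'$ satisfying $\sum_e k'_e = \sum_e k_e$ and $k'_e\ge 2$ on every edge.

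The "moreover" clause follows from the defining property of $\mathcal{S}$. If the final tree $T$ has a right leaf $v\in[m_1]$ with unique incident edge $e=uv$, then since every right vertex of $G$ has degree $\ge 2$, there is at least one edge $e' \ne e$ of $G$ incident to $v$. During the recursion $e'$ must have been eliminated by an AM--GM move on a cycle through $v$, and the tree-swap rule merges its exponent (initially $\ge 2$) into the unique surviving tree edge at $v$, which can only be $e$. Combined with the original $k_e \ge 2$, the accumulated exponent is $k'_e \ge 4$. The main obstacle is precisely the bookkeeping of Step~2: arranging the induction so that it terminates at a \emph{single} spanning tree and simultaneously tracks the right-leaf-absorption at every right vertex of $G$. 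Both points are standard tree-swap manipulations and follow \cite{latala2018dimension}.
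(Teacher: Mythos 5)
Your argument has a genuine gap at its core, and it is instructive to compare it with the paper's Hölder-based step.

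The paper's move is local: it picks a right vertex $r$ lying on a cycle, selects two edges $e_1,e_2$ at $r$ whose individual removal leaves $G$ connected, writes $W^k(G)$ as a product of two factors raised to conjugate exponents $k_{e_1}/\bar k$ and $k_{e_2}/\bar k$, and applies Hölder's inequality to get
\[
W^k(G)\le W^{k_1'}(G_1)^{k_{e_1}/\bar k}\,W^{k_2'}(G_2)^{k_{e_2}/\bar k}\le \max_{s=1,2} W^{k_s'}(G_s).
\]
Because the conjugate exponents sum to one, the product of powers collapses to a genuine maximum with \emph{no prefactor}, and exactly one edge is removed per step, so connectivity and the invariant $k_v\ge 4$ for every right vertex $v$ are trivially preserved. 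Your AM--GM move $\prod_{e\in C}b_e^{k_e}\le\max_{e^*}b_{e^*}^{K_C}$ is true pointwise, but once you sum over labels and partition by the argmax, the natural conclusion is $W^k(G)\le\sum_{e^*\in C}W^{k^{(e^*)}}(G_{e^*})$, which is a \emph{sum}, not a max; passing to the max costs a factor $|C|$ at each step, and these factors accumulate. The "tree-swap" you invoke determines \emph{which} tree survives, but it does nothing to remove the $\sum\text{ vs. }\max$ prefactor; the lemma's conclusion has no constant, so this gap is fatal as written.

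There are two further problems. First, you delete \emph{all} edges of the cycle $C$ and add back a single edge $e^*$; since $C$ contains $|C|-1$ edges of the spanning tree $T_0$, the resulting graph $G_{e^*}=(G\setminus C)\cup\{e^*\}$ is typically disconnected (for a bipartite cycle $|C|\ge 4$, so at least one right vertex of $C$ loses both of its $C$-edges when $e^*$ is not incident to it, and can become isolated). You therefore cannot conclude that the iteration terminates at a \emph{spanning} tree. Second, your justification of the ``moreover'' clause relies on the merged exponent landing on the unique tree edge at the right leaf $v$, but in your scheme the entire cycle weight $K_C$ is concentrated on one arbitrary edge $e^*\in C$, which need not be incident to $v$ at all; so the claimed lower bound $k_e'\ge 4$ at right leaves does not follow. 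The paper's version of this invariant is automatic precisely because the Hölder step only transfers weight between two edges sharing the \emph{same} right vertex, so $k_v=\sum_{u\in N(v)}k_{uv}$ is preserved throughout, and $k_v\ge 4$ holds initially because every right vertex of a shape in $\mathcal{S}$ has at least two incident edges each with exponent $\ge 2$.
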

\begin{proof}
    If $G$ is a tree, the equality is rather trivial, so suppose $G$ is not a tree. In this case, let $r \in [m_1]$ be a right vertex in a cycle in $G$. In particular, there exist two distinct edges $e_1=l_1r$ and $e_2=l_2r$ such that $G_s=(V(G),E(G)\setminus\{e_s\})$ is still connected for $s=1,2$. Let $\bar{k}=k_{e_1}+k_{e_2}$. Then
    \begin{align*}
        W^k(G)=\sum_{w_1 \ne \cdots \ne w_{m_2}}\sum_{t_1 \ne \cdots \ne t_{m_1}} \prod_{s=1,2}\left(b_{w_{l_s}t_r}^{\bar{k}}\prod_{e=ij \ne e_1,e_2} b_{w_it_j}^{k_e}\right)^{k_{e_s}/\bar{k}}.
    \end{align*}
    Holder's Inequality implies that
    \begin{align*}
        W^k(G) \le \max_{s=1,2} \sum_{w_1 \ne \cdots \ne w_{m_2}}\sum_{t_1 \ne \cdots \ne t_{m_1}}b_{w_{l_s}t_r}^{\bar{k}}\prod_{e=ij \ne e_1,e_2} b_{w_it_j}^{k_e}=\max_{s=1,2} W^{k'_s}(G_s).
    \end{align*}
    Notice that $G_s$ runs over all vertices of $G$, $|E(G_s)|=|E(G)|-1$ and $G_i$ is still connected. Moreover, the neighborhood of $v \ne r$ is preserved and so are the weights for all $v \in [m_1]$, namely,
    \begin{align*}
        (k'_s)_v:=\sum_{u \in N(v,G_s)}(k'_s)_{(uv)}=\sum_{u \in N(v,G)}k_{(uv)}=k_v \ge 4,
    \end{align*}
    where $N(v,G)$ denotes the neighborhood of $v$ in $G$, and the last inequality follows as $v$ has at least two neighbors in $G$. The result follows by induction (see \cite[Lemma 2.9]{latala2018dimension}).
\end{proof}

Let $\mathcal{T}_{m_1,m_2}$ be the set of bipartite trees over $[m_2] \sqcup [m_1]$. By Lemma \ref{lemma reduction to trees}, we can assume that $G \in \mathcal{T}_{m_1,m_2}$. In \cite{latala2018dimension}, the authors developed a method to prune leaves of $G$ iteratively. In our case, however, we will prune the right vertices. To keep the notation clean, let
\begin{align*}
    W(G)=\sum_{w \in [d]^{m_2}_{\ne}}\sum_{t \in [n]^{m_1}_{\ne}} \prod_{e=ij \in E(G)}b_{w_it_j}^{(e)},
\end{align*}
where $(b^{(e)})_{e \in E(G)}$ is a family of $d\times n$ matrices and 
\begin{align*}
    [m]^I_{\ne}:=\{w \in [m]^I: w_k \ne w_l\, , \forall k \ne l \in I\}.
\end{align*}
We can easily recover $W^k(G)$ by setting $b^{(e)}_{wt}=b^{k_e}_{wt}$. 

We have the analog of Lemma 2.10 in \cite{latala2018dimension}. Let $\mathcal{L}(G)$ be the set of leaves of $G$ and for each $v \in \mathcal{L}(G) \cap [m_1]$, let $u_v$ be its only neighbor.
\begin{lemma}\label{lemma after the reduction}
    For any $G \in \mathcal{T}_{m_1,m_2}$ and $p_v \ge 1$ such that
    \begin{align*}
        \sum_{v \in [m_1]}\frac{1}{p_v}=1,
    \end{align*}
    we have
    \begin{align*}
        W(G) \le &\prod_{v \in \mathcal{L}(G)\cap [m_1]} \left\{\sum_{i \in [d]}\left(\sum_{j \in [n]}b_{ij}^{(u_vv)}\right)^{p_v}\right\}^{\frac{1}{p_v}} \times \\
        &\prod_{\substack{v \in \mathcal{L}(G)^c \cap [m_1]\\ u \in N(v) \cap \mathcal{L}(G)^c}} \left\{\sum_{i \in [d]}\left[\sum_{j \in [n]}b_{ij}^{(uv)}\prod_{a \in N(v) \setminus \{u\}}\left(\sum_{l \ne i}b_{lj}^{(av)}\right)\right]^{p_v}\right\}^{\frac{1}{p_v}\frac{1}{\alpha_{uv}}},
    \end{align*}
    where $\alpha_{uv}$ satisfies
    \begin{align*}
        \sum_{u \in N(v) \cap \mathcal{L}(G)^c} \frac{1}{\alpha_{uv}}=1,
    \end{align*}
    for all $v \in \mathcal{L}(G)^c \cap [m_1]$.
\end{lemma}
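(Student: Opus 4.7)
The proof plan is to iteratively apply Hölder's inequality to decouple the contributions of the various vertices of $G$, distinguishing between leaf and non-leaf right vertices. The first step is to separate the inner sum over the $t$-indices. Since $G$ is a tree, each edge incident to a given right vertex $v$ has $t_v$ as its right endpoint, and upon dropping the constraint that the $t_v$'s be distinct (which only weakens the bound), the sum over $t \in [n]^{m_1}$ factorises:
\begin{align*}
W(G) \le \sum_{w \in [d]^{m_2}_{\ne}} \prod_{v \in [m_1]} F_v(w), \qquad F_v(w) := \sum_{j \in [n]} \prod_{u \in N(v)} b^{(uv)}_{w_u j}.
\end{align*}
Crucially, $F_v(w)$ depends on $w$ only through the coordinates $(w_u)_{u \in N(v)}$.

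The second step applies Hölder's inequality to the outer sum over $w$ with exponents $(p_v)_{v \in [m_1]}$, using the hypothesis $\sum_v 1/p_v = 1$. For a leaf $v \in \mathcal{L}(G) \cap [m_1]$ with unique neighbor $u_v$, we have $F_v(w) = \sum_j b^{(u_v v)}_{w_{u_v} j}$, which only depends on $w_{u_v}$; the corresponding factor becomes $\bigl\{\sum_i (\sum_j b^{(u_v v)}_{ij})^{p_v}\bigr\}^{1/p_v}$, giving the first product in the stated bound. For a non-leaf $v \in \mathcal{L}(G)^c \cap [m_1]$, the factor $F_v(w)^{p_v}$ depends on several $w_u$'s, and I would then apply a second Hölder, now over the non-leaf left neighbors $u \in N(v) \cap \mathcal{L}(G)^c$, with exponents $(\alpha_{uv})$ satisfying $\sum_u 1/\alpha_{uv} = 1$. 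This distributes $v$'s contribution across its non-leaf left neighbours: in each resulting piece one retains $w_u$ as the distinguished index and sums out $w_a$ for $a \in N(v) \setminus \{u\}$ over $[d] \setminus \{w_u\}$, where the distinctness constraint built into $[d]^{m_2}_{\ne}$ is precisely what produces the $\sum_{l \ne i}$ factors appearing in the second product.

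The hard part will be the bookkeeping: every left vertex $u$ may simultaneously be a distinguished coordinate for several right non-leaf vertices, and the exponents $p_v$ and $\alpha_{uv}$ must be tracked carefully so that the sums over $w_u$ recombine without double counting. The tree structure of $G$ is essential here: rooting $G$ at an arbitrary vertex and inducting from the leaves inward ensures that the responsibility for each edge is assigned to exactly one Hölder factor, and the constraints $\sum_v 1/p_v = 1$ and $\sum_{u \in N(v) \cap \mathcal{L}(G)^c} 1/\alpha_{uv} = 1$ guarantee the exponents match up. The overall strategy mirrors the one used in \cite[Lemma 2.10]{latala2018dimension}, adapted to the bipartite setting where right vertices play the distinguished role instead of generic leaves of a tree.
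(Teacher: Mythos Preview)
Your factorisation over $t \in [n]^{m_1}$ in the first step is correct and a clean starting point. The gap is in the second step: a single H\"older inequality on the full $w$-sum does not produce the claimed factors. H\"older gives
\begin{align*}
\sum_{w \in [d]^{m_2}_{\ne}} \prod_{v \in [m_1]} F_v(w) \le \prod_{v \in [m_1]} \left(\sum_{w \in [d]^{m_2}_{\ne}} F_v(w)^{p_v}\right)^{1/p_v},
\end{align*}
and each factor on the right is still a sum over the \emph{entire} tuple $w$, not over a single $i \in [d]$. Since $F_v$ depends only on $(w_u)_{u \in N(v)}$, the remaining $m_2 - |N(v)|$ coordinates run free and contribute a factor of order $d^{\,m_2 - |N(v)|}$; after the $1/p_v$ power and the product over $v$ this leaves a spurious positive power of $d$ that is absent from the target bound. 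A generalised H\"older inequality that would collapse each factor to a sum only over $(w_u)_{u \in N(v)}$ requires instead the fractional-cover condition $\sum_{v:\, u \in N(v)} 1/p_v \ge 1$ for every $u \in [m_2]$, and your hypothesis $\sum_{v} 1/p_v = 1$ does not give that on a tree. Your proposed ``second H\"older'' over $u \in N(v)\cap \mathcal{L}(G)^c$ acts inside a term that is already a scalar, so it cannot repair this.

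The paper avoids the problem by never separating all the $F_v$'s at once. The proof is an induction on the number of remaining right vertices: at each stage one carries a sum over $w,t$ indexed by a subtree, multiplied by an already-peeled factor of the form $[\sum_j b_{w_u j}^{(uv)} \prod_{a} (\sum_{l \ne w_u} b^{(av)}_{lj})]^{q}$ attached at a single surviving left index $u$ still inside the $w$-sum. One then chooses a new extreme right vertex $r_h$ of the current subtree (at maximal graph distance, so that all but one of its left neighbours are leaves), sums out $t_{r_h}$ and those leaves to produce a second such factor, and applies H\"older between those \emph{two} factors only---the sum over the remaining $w$-coordinates stays outside and is never duplicated. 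This is exactly what guarantees each final factor is a one-dimensional $\sum_{i \in [d]}$. Your last paragraph gestures at an induction from the leaves inward, but the concrete two-step plan you wrote above is not that argument; to fix it you have to replace the global H\"older by this iterated local one and track the exponents $q_h = \sum_{v \notin J_h} p_{v_h}/p_v$ as the paper does.
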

Before proving this result, we will use the following easier version. Let $u=u(v)$ be the choice $u \in N(v) \cap \mathcal{L}(G)^c$ that maximizes the second term in the bound, then the following holds.
\begin{coro}\label{holder type inequality coro for trees}
   For any $G \in \mathcal{T}_{m_1,m_2}$ and $p_v \ge 1$ such that
    \begin{align*}
        \sum_{v \in [m_1]}\frac{1}{p_v}=1,
    \end{align*}
    we have
    \begin{align*}
        W(G) \le &\prod_{v \in \mathcal{L}(G)\cap [m_1]} \left\{\sum_{i \in [d]}\left(\sum_{j \in [n]}b_{ij}^{(u_vv)}\right)^{p_v}\right\}^{\frac{1}{p_v}} \times \\
        &\prod_{v \in \mathcal{L}(G)^c \cap [m_1]} \left\{\sum_{i \in [d]}\left[\sum_{j \in [n]}b_{ij}^{(uv)}\prod_{a \in N(v) \setminus \{u\}}\left(\sum_{l \ne i}b_{lj}^{(av)}\right)\right]^{p_v}\right\}^{\frac{1}{p_v}}.
    \end{align*} 
\end{coro}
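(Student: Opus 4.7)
The plan is to derive Corollary \ref{holder type inequality coro for trees} as an immediate consequence of Lemma \ref{lemma after the reduction} by replacing, for each non-leaf right vertex $v$, the weighted geometric product over $u \in N(v)\cap \mathcal{L}(G)^c$ with a single maximum term. The observation is that the exponents $1/\alpha_{uv}$ in the lemma's second product sum to $1$ over $u$, so a weighted geometric mean of non-negative quantities is controlled by their maximum.

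Concretely, for each $v \in \mathcal{L}(G)^c \cap [m_1]$ define
\begin{align*}
X_{uv} := \left\{\sum_{i \in [d]}\left[\sum_{j \in [n]}b_{ij}^{(uv)}\prod_{a \in N(v) \setminus \{u\}}\left(\sum_{l \ne i}b_{lj}^{(av)}\right)\right]^{p_v}\right\}^{1/p_v}.
\end{align*}
The contribution of $v$ to the right-hand side of Lemma \ref{lemma after the reduction} is then
\begin{align*}
\prod_{u \in N(v)\cap \mathcal{L}(G)^c} X_{uv}^{1/(p_v\alpha_{uv})} = \left(\prod_{u \in N(v)\cap \mathcal{L}(G)^c} X_{uv}^{1/\alpha_{uv}}\right)^{1/p_v}.
\end{align*}
Since $\sum_{u \in N(v)\cap \mathcal{L}(G)^c} 1/\alpha_{uv} = 1$ and $X_{uv} \ge 0$, the weighted AM--GM inequality (equivalently, the fact that a convex combination of logarithms is bounded by the maximum) yields
\begin{align*}
\prod_{u \in N(v)\cap \mathcal{L}(G)^c} X_{uv}^{1/\alpha_{uv}} \le \max_{u \in N(v)\cap \mathcal{L}(G)^c} X_{uv} = X_{u(v) v},
\end{align*}
where $u(v)$ is the maximizer prescribed in the statement.

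Substituting back into the bound of Lemma \ref{lemma after the reduction} and taking the product over all $v \in \mathcal{L}(G)^c \cap [m_1]$ yields exactly the inequality asserted by the corollary. There is no real obstacle here; the only thing to verify is that the exponents are correctly matched, which follows from the normalization $\sum_u 1/\alpha_{uv}=1$ built into the lemma.
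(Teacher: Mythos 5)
Your derivation is correct and is essentially the argument the paper implicitly relies on when it states the corollary immediately after Lemma \ref{lemma after the reduction} without separate proof: for each non-leaf right vertex $v$ the exponents $1/\alpha_{uv}$ sum to $1$, so the weighted geometric mean $\prod_u X_{uv}^{1/\alpha_{uv}}$ is dominated by $\max_u X_{uv}$, and substituting this maximizer $u(v)$ gives the stated bound.
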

\begin{proof}[Proof of Lemma \ref{lemma after the reduction}]
The proof follows by induction. If $m_1=1$, then it is easy to check that $p_v=1$ and
\begin{align*}
    W(G) \le \sum_{i \in [d]}\sum_{j \in [n]}\prod_{a \in N(1')\setminus \{1\}}b_{ij}^{(11')} \left(\sum_{l\ne i}b_{lj}^{(a1')}\right).
\end{align*}
Therefore, if $|N(1')|>1$, $W(G)$ has the second form on the bound shown in the lemma. Otherwise, $|N(v)|=1$ and the bound has the first form. Hence, we can assume that $m_1>1$.

Let $\mathcal{L}=\mathcal{L}(G)$ and $v_1,v_2 \in [m_1]$ be such that 
\begin{align*}d(v_1,v_2)=\max_{r,r' \in [m_1]}d(r,r'), 
\end{align*}
where the distance is the graph distance. Therefore, both $v_1$ and $v_2$ have only one neighbor $u_1 \in N(v_1) \cap \mathcal{L}^c$ and $u_2 \in N(v_2) \cap \mathcal{L}^c$. This follows the argument shown in Proposition \ref{propo for the shapes and operator norm}. Let then $H$ be the subgraph generated by removing $v_1$, $v_2$ and all leaves $(N(v_1)\cup N(v_2))\cap \mathcal{L}$. Denote $H=(I \sqcup J, E(H))$. Then we have
\begin{align*}
W(G) \le \sum_{\substack{w \in [d]^I_{\ne}}}\sum_{\substack{t \in [n]^J_{\ne}}}&\left[\sum_{j \in [n]}b_{w_{u_1}j}^{(u_1v_1)}\prod_{a \in N(v_1)\setminus\{u_1\}}\left(\sum_{l \ne w_{u_1}}b_{lj}^{(av_1)}\right)\right] \times \\
&\left[\sum_{j \in [n]}b_{w_{u_2}j}^{(u_2v_2)}\prod_{a \in N(v_2)\setminus\{u_2\}}\left(\sum_{l \ne w_{u_2}}b_{lj}^{(av_2)}\right)\right]\prod_{e=ab \in E(H)}b_{w_at_b}^{(ab)},
\end{align*}
where we define
\begin{align*}
\prod_{a \in N(v)\setminus\{u\}}\left(\sum_{l \ne w_u}b_{lj}^{(av)}\right)=1,
\end{align*}
if $N(v)\setminus\{u\}=\varnothing$. Using Holder's Inequality, we can estimate
\begin{align}\label{induction step}
W(G) \le&
\left\{\sum_{\substack{w \in [d]^I_{\ne}}}\sum_{\substack{t \in [n]^J_{\ne}}}\left[\sum_{j \in [n]}b_{w_{u_1}j}^{(u_1v_1)}\prod_{a \in N(v_1)\setminus\{u_1\}}\left(\sum_{l\ne w_{u_1}}b_{lj}^{(av_1)}\right)\right]^{1+\frac{p_{v_1}}{p_{v_2}}}
\prod_{e=ab \in E(H)}b_{w_at_b}^{(ab)}\right\}^{\frac{p_{v_2}}{p_{v_1}+p_{v_2}}}\times \nonumber \\
&\left\{\sum_{\substack{w \in [d]^I_{\ne}}}\sum_{\substack{t \in [n]^J_{\ne}}}\left[\sum_{j \in [n]}b_{w_{u_2}j}^{(u_2v_2)}\prod_{a \in N(v_2)\setminus\{u_2\}}\left(\sum_{l\ne w_{u_2}}b_{lj}^{(av_2)}\right)\right]^{1+\frac{p_{v_2}}{p_{v_1}}}
\prod_{e=ab \in E(H)}b_{w_at_b}^{(ab)}\right\}^{\frac{p_{v_1}}{p_{v_1}+p_{v_2}}}.
\end{align}
Note that this inequality preserves the number of summations of right and left vertices, and also the homogeneity. Note also that if $v \in J$, the neighbors of $v$ in $H$ and $G$ are the same.

The induction will be based on inequality \eqref{induction step}. Suppose, for some $r>1$ that
\begin{align*}
W(G) \le \prod_{h=1}^H\left\{\sum_{\substack{w \in [d]^{I_h}_{\ne}}}\sum_{\substack{t \in [n]^{J_h}_{\ne}}}\left[\sum_{j \in [n]}b_{w_{u_h}j}^{(u_hv_h)}\prod_{a \in N(v_h)\setminus\{u_h\}}\left(\sum_{l \ne w_{u_h}}b_{lj}^{(av_h)}\right)\right]^{q_h}
\prod_{e=ab \in E(G_h)}b_{w_at_b}^{(ab)}\right\}^{\frac{1}{\alpha_h}},
\end{align*}
where $H<\infty$, $N(v)$ is the neighbor of $v$ in $G$,
\begin{enumerate}
\item For every $h$, $u_h \in I_h$, $v_h \notin J_h$ and $av_h \notin E(G_h)$ for every $a \in N(v_h)\setminus \{u_h\}$;
\item For every $h$, $G_h$ is a tree over $I_h \sqcup J_h$ and $|J_h|=r$;
\item The inequality is $1$-homogeneous in all the variables $b^{(e)}$ and it preserves the number of left and right summations; 
\item The exponents $q_h$ satisfies
\begin{align*}
    q_h=\sum_{v \in [m_1]\setminus J_h}\frac{p_{v_h}}{p_{v}},
\end{align*}
and $\alpha_h \ge 1$.
\end{enumerate}
We aim to show that if this holds for $r>1$, so does it for $r-1$. Indeed, fix one of the terms
\begin{align*}
    T_h:=\left\{\sum_{\substack{w \in [d]^{I_h}_{\ne}}}\sum_{\substack{t \in [n]^{J_h}_{\ne}}}\left[\sum_{j \in [n]}b_{w_{u_h}j}^{(u_hv_h)}\prod_{a \in N(v_h)\setminus\{u_h\}}\left(\sum_{l\ne w_{u_h}}b_{lj}^{(av_h)}\right)\right]^{q_h}
\prod_{e=ab \in E(G_h)}b_{w_at_b}^{(ab)}\right\}.
\end{align*}
Since $G_h$ is a tree and $r>1$, there exists $r_h$ such that $u_h$ is not a leaf of $r_h$ and $r_h$ has only one neighbor $l_h$ such that $N(l_h)>1$ in $G_h$. Let then $H_h$ be the subgraph (a tree) of $G_h$ where we remove $r_h$ and all of its leaves and let $H_h=(I_{h'}\sqcup J_{h'},E(H_h))$, then
\begin{align*}
T_h\le \sum_{\substack{w \in [d]^{I_{h'}}_{\ne}}}\sum_{\substack{t \in [n]^{J_{h'}}_{\ne}}}&\left[\sum_{j \in [n]}b_{w_{u_h}j}^{(u_hv_h)}\prod_{a \in N(v_h)\setminus\{u_h\}}\left(\sum_{l\ne w_{u_h}}b_{lj}^{(av_h)}\right)\right]^{q_h}\times \\ 
&\left[\sum_{j \in [n]}b_{w_{l_h}j}^{(l_hr_h)}\prod_{a \in N(r_h)\setminus\{l_h\}}\left(\sum_{l\ne w_{l_h}}b_{lj}^{(ar_h)}\right)\right]\prod_{e=ab \in E(G_h)}b_{w_at_b}^{(ab)}.
\end{align*}
We can thus estimate by Holder's Inequality that
\begin{align*}
T_h \le &\left\{\sum_{\substack{w \in [d]^{I_{h'}}_{\ne}}}\sum_{\substack{t \in [n]^{J_{h'}}_{\ne}}}\left[\sum_{j \in [n]}b_{w_{u_h}j}^{(u_hv_h)}\prod_{a \in N(v_h)\setminus\{u_h\}}\left(\sum_{l\ne w_{u_h}}b_{lj}^{(av_h)}\right)\right]^{q_h'}\prod_{e=ab \in E(G_h)}b_{w_at_b}^{(ab)}\right\}^{1/\alpha_h}\times \\
&\left\{\sum_{\substack{w \in [d]^{I_{h'}}_{\ne}}}\sum_{\substack{t \in [n]^{J_{h'}}_{\ne}}}\left[\sum_{j \in [n]}b_{w_{l_h}j}^{(l_hr_h)}\prod_{a \in N(r_h)\setminus\{l_h\}}\left(\sum_{l\ne w_{l_h}}b_{lj}^{(ar_h)}\right)\right]^{q}\prod_{e=ab \in E(G_h)}b_{w_at_b}^{(ab)}\right\}^{1/q},
\end{align*}
where $q_h'/q_h$ and $q$ are conjugate exponents. Again, the inequality is $1$-homogeneous in all the variables it involves, and it preserves the number of summations. Moreover, we can set
\begin{align*}
&q_h'=\sum_{v \in [m_1]\setminus J_{h'}}\frac{p_{v_h}}{p_v}\\
&q=\sum_{v \in [m_1]\setminus J_{h'}}\frac{p_{l_h}}{p_v},
\end{align*}
and it is easy to check that indeed $q_h'/q_h$ and $q$ are conjugate exponents. Note that each new term has the same form as in the induction step with $|J_{h'}|=r-1$, therefore the induction is proved. 

The previous argument also shows that the induction holds for $r=0$. Since the choice of $u \in \mathcal{L}^c \cap N(v)$ is arbitrary for each $v$, we deduce
\begin{align*}
W(G) &\le \prod_{h=1}^H\left\{\sum_{\substack{w \in [d]^{I_{h}}_{\ne}}}\sum_{\substack{t \in [n]^{J_{h}}_{\ne}}}\left[\sum_{j \in [n]}b_{w_{u_h}j}^{(u_hv_h)}\prod_{a \in N(v_h)\setminus\{u_h\}}\left(\sum_{l\ne w_{u_h}}b_{lj}^{(av_h)}\right)\right]^{q_h}
\prod_{e=ab \in E(G_h)}b_{w_at_b}^{(ab)}\right\}^{\frac{1}{\alpha_h}}\\
&\le \prod_{e=uv \in E(G):u \in \mathcal{L}^c}\left\{\sum_{i \in [d]}\left[\sum_{j \in [n]}b_{ij}^{(e)}\prod_{a \in N(v)\setminus\{u\}}\left(\sum_{l \ne i}b_{lj}^{(av)}\right)\right]^{p_{v}}\right\}^{\frac{1}{\alpha_e}}.
\end{align*}
The conclusion of the lemma follows by the renormalization $\alpha_{uv} \leftarrow p_v\alpha_{uv}$ and splitting the product over $v \in \mathcal{L}$ and $v \notin \mathcal{L}$.
\end{proof}

Now we can prove Proposition \ref{propo for the shapes and the schatten norm}.
\begin{proof}[Proof of Proposition \ref{propo for the shapes and the schatten norm}]
  Let $|k|=\sum_v k_v=2p$ and $\mathcal{L}=\mathcal{L}(G) \cap [m_1]$. By Lemma \ref{lemma reduction to trees} and Corollary \ref{holder type inequality coro for trees} with $p_v=|k|/k_v$, we get
  \begin{align*}
      W^k(G) \le W^{k'}(T)\le  &\prod_{v \in \mathcal{L}}\left\{\sum_{i \in [d]}\left(\sum_{j \in [n]}b_{ij}^{k_{v}}\right)^{\frac{|k|}{k_v}}\right\}^{\frac{k_v}{|k|}} \times \\
&\prod_{v \in \mathcal{L}^c}\left\{\sum_{i \in [d]}\left[\sum_{j \in [n]}b_{ij}^{k_{uv}}\prod_{a \in N(v) \setminus\{u\}}\left(\sum_{l\ne i}b_{lj}^{k_{av}}\right)\right]^{\frac{|k|}{k_v}}\right\}^{\frac{k_v}{|k|}},
  \end{align*}
  where $T$ is the spanning tree of $G$ that maximizes $W^{k'}(T')$ in Lemma \ref{lemma reduction to trees}.  Since $k_v \ge 4$ and $b_{ij}^{k_{uv}} \le b_{ij}^2\sigma_*^{k_{uv}-2}$, we get
\begin{align*}
    W^k(G) \le &\sigma_*^{|k|-4|\mathcal{L}|-2\sum_{v \in \mathcal{L}^c}|N(v)|}\prod_{v \in \mathcal{L}}\left\{\sum_{i \in [d]}\left(\sum_{j \in [n]}b_{ij}^{4}\right)^{\frac{|k|}{k_v}}\right\}^{\frac{k_v}{|k|}} \times \\
&\prod_{v \in \mathcal{L}^c}\left\{\sum_{i \in [d]}\left[\sum_{j \in [n]}b_{ij}^{2}\prod_{a \in N(v) \setminus\{u\}}\left(\sum_{l\ne i}b_{lj}^{2}\right)\right]^{\frac{|k|}{k_v}}\right\}^{\frac{k_v}{|k|}}.
\end{align*}
As $T$ is a spanning tree, we have 
\begin{align*}
    &\sum_{v \in \mathcal{L}^c}|N(v)|+|\mathcal{L}|=m_2+m_1-1;\\
    &\sum_{v \in \mathcal{L}^c}|N(v)|-2|\mathcal{L}^c|=m_2-m_1-1+|\mathcal{L}|.
\end{align*}
Moreover, we can remove $\sigma_C$ from each term in the second product to get that
\begin{align*}
    W^k(G) \le &\sigma_*^{|k|-2(m_1+m_2-1)-2|\mathcal{L}|}\sigma_C^{2(m_1+m_2-1)+2|\mathcal{L}|}\prod_{v \in \mathcal{L}}\left\{\sum_{i \in [d]}\left(\sum_{j \in [n]}b_{ij}^{4}\right)^{\frac{|k|}{k_v}}\right\}^{\frac{k_v}{|k|}} \times \\
&\prod_{v \in \mathcal{L}^c}\left\{\sum_{i \in [d]}\left[\sum_{j \in [n]}b_{ij}^{2}\left(\sum_{l\ne i}b_{lj}^{2}\right)\right]^{\frac{|k|}{k_v}}\right\}^{\frac{k_v}{|k|}}.
\end{align*}
Finally, the inequality of the norms in $\real^d$ implies that
\begin{align*}
    \norm{\cdot}_{\frac{|k|}{k_v}} \le d^{\frac{k_v-4}{|k|}}\norm{\cdot}_{\frac{|k|}{4}},
\end{align*}
so we deduce
\begin{align*}
    W^k(G) \le d\sigma_*^{|k|-2(m_1+m_2-1)-2|\mathcal{L}|}\sigma_C^{2(m_1+m_2-1)+2|\mathcal{L}|} \sigma_p^{2|\mathcal{L}|} \bar{\sigma}_p^{2|\mathcal{L}^c|}.
\end{align*}
The proof of Proposition \ref{propo for the shapes and the schatten norm} follows by a straightforward computation and the fact that $0 \le |\mathcal{L}| \le m_1$.
\end{proof}
\begin{remark}\label{Remark about l diferent than i}
    Note that we rather proved Theorem \ref{thm second main} with a parameter $\sigma_p'$ instead of $\sigma_p$, where $\sigma_p'$ only takes $l \ne i$, that is,
    \begin{align*}
        \sigma_p'=\left\{\sum_{i \in [d]}\left[\sum_{j \in [n]}\sum_{l \ne i}b_{ij}^2b_{lj}^2\right]^{p/2}\right\}^{1/p}.
    \end{align*}
    This minor change is only important for cases where the contribution of a column $X_j$ appears only in the diagonal part, that is, when $X_j=b_{ij}e_{i}$ for some $i$. 
\end{remark}

%----------------------------------------------------------

\section{Examples}\label{sec examples}

%Now, we will give a series of examples that improve the previous known results of \cite{zhang2020non,bandeira2021matrix}. We state here the main result of \cite{zhang2020non} for comparison.
Let us start by recalling the previous known results in \cite{cai2022non,bandeira2021matrix}.
\begin{thm}[Theorem 2.1 in \cite{cai2022non}]\label{thm:zhang}
    In the setting of Theorem \ref{main theorem}, we have
    \begin{align*}
        &\esp \norm{XX^T-\esp XX^T} \\
        &\le (1+\varepsilon)\left\{2\sigma_R\sigma_C+\sigma_C^2+C(\varepsilon)(\sigma_C\sigma_*+\sigma_R\sigma_*)\sqrt{\log(n\wedge d)}+C^2(\varepsilon)\sigma_*^2\log(n\wedge d)\right\}.
    \end{align*}
\end{thm}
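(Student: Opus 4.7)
The plan is to mirror the proof of Theorem \ref{main theorem} line by line, replacing only the sharp shape bound of Proposition \ref{propo for the shapes and operator norm} by a coarser estimate that involves solely the elementary parameters $\sigma_R$, $\sigma_C$ and $\sigma_*$. Once that crude shape bound is in hand, the comparison to a reduced-dimension standard Gaussian matrix, Corollary \ref{coro norm of off-diagonal standard}, the diagonal estimate from Theorem \ref{thm diagonal}, and the final choice $p=\lceil\alpha\log(n\wedge d)\rceil$ with $1+\varepsilon=e^{1/\alpha}$ can be reused with no modification. I first split
\begin{align*}
\esp\norm{XX^T-\esp XX^T}\le \esp\norm{\Delta XX^T}+\esp\norm{\text{Diag}(XX^T)-\esp XX^T}
\end{align*}
and absorb the diagonal term using Theorem \ref{thm diagonal} together with the elementary bounds $\bar{\sigma}_p\le d^{1/p}\sigma_*\sigma_R$ and $b_p\le d^{1/(2p)}\sigma_*$.

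The heart of the argument is the following shape inequality, which I claim holds (after normalising $\sigma_*=1$ by homogeneity) for every $s\in\mathcal S$:
\begin{align*}
W(s)\le \bigl[d\,\sigma_R^{2m_1(s)}\sigma_C^{2(m_2(s)-1)}\bigr]\wedge \bigl[n\,\sigma_R^{2(m_1(s)-1)}\sigma_C^{2m_2(s)}\bigr].
\end{align*}
To prove it, I first invoke Lemma \ref{lemma reduction to trees} to pass from $W^k(G)$ to $W^{k'}(T)$ for some spanning tree $T$ of $G$, and then collapse every exponent $k'_e\ge 2$ down to $2$ via $b^{k'_e}\le b^2\sigma_*^{k'_e-2}$. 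I next grow the tree $T$ edge by edge starting from a fixed left root $w_1\in[d]$ (contributing the leading factor $d$): each time a new right vertex $v$ is attached through an edge $(u,v)$ with $u$ already visited, summing over $t_v$ costs at most $\sum_{t_v}b_{w_u t_v}^2\le \sigma_R^2$; each time a new left vertex $w$ is attached through $(w,v)$, summing over $w$ costs at most $\sum_{w}b_{w t_v}^2\le \sigma_C^2$. With $m_1$ right additions and $m_2-1$ left additions, the first bound follows; starting from a right root instead yields the symmetric bound with $n$ in front.

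Armed with this estimate, I follow the Gaussian-comparison step of the proof of Theorem \ref{main theorem}. Setting $r_1=\lceil\sigma_R^2\rceil+p/2$ and $r_2=\lceil\sigma_C^2\rceil+p/2$, the falling factorials $r_1!/(r_1-m_1)!\ge\sigma_R^{2m_1}$ and $r_2!/(r_2-m_2)!\ge r_2\,\sigma_C^{2(m_2-1)}$ give
\begin{align*}
\esp\text{Tr}(\Delta XX^T)^p\le \frac{d}{r_2}\esp\text{Tr}(\Delta GG^T)^p\le d\,\esp\norm{\Delta GG^T}^p
\end{align*}
for a standard $r_2\times r_1$ Gaussian $G$, and Corollary \ref{coro norm of off-diagonal standard} then yields
\begin{align*}
(\esp\norm{\Delta XX^T}^p)^{1/p}\le d^{1/p}\bigl\{2\sigma_R\sigma_C+\sigma_C^2+C\sqrt p\,\sigma_*(\sigma_R+\sigma_C)+C'p\sigma_*^2\bigr\}.
\end{align*}
The symmetric bound replaces $d^{1/p}$ by $n^{1/p}$, so the overall factor is $(n\wedge d)^{1/p}$. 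Choosing $p=\lceil\alpha\log(n\wedge d)\rceil$ with $\alpha=1/\log(1+\varepsilon)$ then turns $(n\wedge d)^{1/p}$ into $1+\varepsilon$, $\sqrt p$ into $C(\varepsilon)\sqrt{\log(n\wedge d)}$, and $p$ into $C^2(\varepsilon)\log(n\wedge d)$; restoring the diagonal contribution produces the stated inequality.

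The only real obstacle is the shape inequality, but in this coarser regime it is noticeably simpler than Proposition \ref{propo for the shapes and operator norm}: the bounds $\sum_t b_{w,t}^2\le\sigma_R^2$ and $\sum_w b_{w,t}^2\le\sigma_C^2$ apply uniformly, so no case analysis on property $\mathcal P$ or on whether $\beta_\infty\le 1$ is needed. The finer parameters $\sigma_\infty$ and $\tilde\sigma_\infty$ never enter: they are both absorbed into $\sigma_R\sigma_C$ via the crude estimate $\sum_{l\ne i}b_{lj}^2\le\sigma_C^2$.
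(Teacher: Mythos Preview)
The paper does not supply a proof of this statement: Theorem \ref{thm:zhang} is merely quoted from \cite{cai2022non} in Section \ref{sec examples} as a benchmark for comparison. Your derivation is correct and is precisely the natural specialisation of the paper's own proof of Theorem \ref{main theorem}: you replace the refined shape estimate of Proposition \ref{propo for the shapes and operator norm} by the cruder leaf-peeling bound on a spanning tree using only $\sum_t b_{wt}^2\le\sigma_R^2$ and $\sum_w b_{wt}^2\le\sigma_C^2$, after which the Gaussian comparison, Corollary \ref{coro norm of off-diagonal standard}, Theorem \ref{thm diagonal} and the choice $p=\lceil\alpha\log(n\wedge d)\rceil$ carry over verbatim. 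This is also the argument of \cite{cai2022non} itself.

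One small remark: invoking Lemma \ref{lemma reduction to trees} is more than you need. As in the proof of Proposition \ref{propo for the shapes and operator norm}, the spanning tree $H$ built from the first-arrival edges $e^{(1)}_k,e^{(2)}_k$ already suffices; after bounding all non-tree edge factors by $\sigma_*=1$ you land directly on $W^2(H)$, and the leaf-peeling argument applies without any preliminary tree reduction. Your route through Lemma \ref{lemma reduction to trees} is valid but slightly heavier than necessary for this coarser bound.
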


\begin{thm}[Theorem 3.12 in \cite{bandeira2021matrix}]\label{thm:bandeira}
    Let $X$ be a $d\times n$ Gaussian matrix with independent entries and $X_{ij}=b_{ij}g_{ij}$, for $b_{ij} \ge 0$. Then 
    \begin{align*}
        &\esp \norm{XX^T-\esp XX^T} \\
        &\le \norm{X_{\emph{free}}X_{\emph{free}}^T-\esp XX^T \otimes 1}+C\left\{\sigma(X)\tilde{v}(X) \log^{3/4}(nd)+\tilde{v}^2(X) \log ^{3/2}(nd)\right\}.
    \end{align*}
\end{thm}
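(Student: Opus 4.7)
Since this is stated as a cited result from \cite{bandeira2021matrix}, the plan is to reduce the quadratic form $XX^T-\esp XX^T$ to the linear intrinsic-freeness framework of that paper by a Hermitization trick, and then lift the resulting linear comparison to its square via a Wick/trace-moment argument.

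First, I would Hermitize. Set
\begin{align*}
H = \begin{pmatrix} 0 & X \\ X^T & 0 \end{pmatrix} = \sum_{i\in[d],\, j\in[n]} b_{ij} g_{ij} A_{ij},
\end{align*}
with $A_{ij} = e_i e_{d+j}^T + e_{d+j} e_i^T$. Then $H$ is a self-adjoint $(d+n)\times(d+n)$ Gaussian matrix that is \emph{linear} in the $g_{ij}$, its square is block-diagonal
\begin{align*}
H^2 - \esp H^2 = \begin{pmatrix} XX^T - \esp XX^T & 0 \\ 0 & X^T X - \esp X^T X \end{pmatrix},
\end{align*}
and the free analogue $H_{\text{free}} = \sum b_{ij} s_{ij} A_{ij}$ (where $s_{ij}$ are free standard semicirculars) has $X_{\text{free}} X_{\text{free}}^T - \esp XX^T \otimes 1$ in the corresponding block. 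Thus $\esp \norm{XX^T-\esp XX^T} \le \esp\norm{H^2-\esp H^2}$ and $\norm{H_{\text{free}}^2 - \esp H^2 \otimes 1} = \norm{X_{\text{free}} X_{\text{free}}^T - \esp XX^T\otimes 1}$ (the two diagonal blocks share their spectrum up to the covariance rescaling), so after this reduction it suffices to compare $H^2 - \esp H^2$ with $H_{\text{free}}^2 - \esp H^2 \otimes 1$.

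Next, I would invoke the master trace-moment comparison at the heart of \cite{bandeira2021matrix}. Their linear intrinsic-freeness theorem bounds $\esp \operatorname{tr}(P(H)^{2p})$ by $\tau(P(H_{\text{free}})^{2p})(1+\text{error})$ for suitable self-adjoint polynomials $P$, via a matching of Gaussian Wick pairings with noncrossing pairings, and with crossing-pairing contributions controlled by products of the covariance parameter $\sigma$ and the fourth-moment parameter $\tilde v$. Applying this with $P(x)=x^2-\esp H^2$ and $p \asymp \log(nd)$ makes the conversion from Schatten-$2p$ norm to operator norm cost only a constant; the combinatorics of $H^2$ reduce to those of $H$ with adjacent pairs of edges, and the centering $-\esp H^2$ kills exactly the pairings that do not contribute to the free moment. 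A routine computation matches $\sigma(H)=\sigma(X)$ and $\tilde v(H)=\tilde v(X)$ from the disjoint supports of the $A_{ij}$, and the resulting logarithmic exponents $\log^{3/4}$ and $\log^{3/2}$ arise purely from the cost of turning Schatten-$2p$ into operator norm at $p\asymp\log(nd)$, not from the polynomial degree.

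The hard part will be precisely this lifting from the linear theorem to the polynomial $H^2-\esp H^2$: the statement in \cite{bandeira2021matrix} governs $\esp\norm{H}$, not norms of polynomials of $H$, so either one reruns the noncrossing-pairing argument from scratch for the degree-four moments $\esp\operatorname{tr}[(H^2-\esp H^2)^{2p}]$ (the subtle point being to verify which crossing contributions the centering cancels), or one transfers the linear comparison to arbitrary polynomial evaluations via a resolvent / Stieltjes-transform argument in the spirit of Haagerup--Thorbj\o{}rnsen. The combinatorial delicacy is to keep the first error term bilinear in $\sigma(X)$ and $\tilde v(X)$ rather than degenerating to $\sigma(X)^2$ or $\tilde v(X)^2$; this requires using the specific structure of $H^2-\esp H^2$, where every surviving pairing must contain at least one ``long-range'' crossing whose contribution is bounded by $\tilde v(X)$ rather than $\sigma(X)$.
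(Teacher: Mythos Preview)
The paper does not prove this statement: Theorem~\ref{thm:bandeira} is quoted verbatim as Theorem~3.12 of \cite{bandeira2021matrix} and is used as a black box. The only argument the present paper supplies is for the \emph{Corollary} that follows it, where the free term $\norm{X_{\text{free}}X_{\text{free}}^T-\esp XX^T\otimes 1}$ and the parameters $\sigma(X),\tilde v(X)$ are computed explicitly for the independent-entries model. There is therefore no ``paper's own proof'' of this theorem to compare your proposal against.

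As an independent sketch toward the cited result, your Hermitization reduction is sound and your identification of the hard step---lifting the linear intrinsic-freeness comparison to the centred square $H^2-\esp H^2$---is accurate. But note that this is precisely what \cite{bandeira2021matrix} already does: their Theorem~3.12 is stated and proved \emph{directly} for the sample-covariance model $\sum_j X_jX_j^T-\esp X_jX_j^T$, not obtained by post-processing a linear bound, so your outline is re-deriving their argument rather than reducing to it. If you want to reconstruct the proof, the correct place to look is their treatment of the Wishart case, where the Wick pairings are organised into crossing and noncrossing contributions at the level of the degree-$2p$ moments of $H^2-\esp H^2$ from the start; the centering eliminates the trivial self-pairings, and the mixed $\sigma\tilde v$ term comes from bounding each crossing by one factor of $\tilde v$ while the remaining structure contributes $\sigma$.
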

\begin{coro}
    In the setting of Theorem \ref{thm:bandeira}, we have
    \begin{align*}
        \norm{X_{\emph{free}}X_{\emph{free}}^T-\esp XX^T \otimes 1} &\le 2\max_{i \in [d]}\left(\sum_{j \in [n]}\sum_{l \in [d]}b_{ij}^2b_{lj}^2\right)^{1/2}+\sigma_C^2\\
        &=2\sigma_\infty +\sigma_C^2
    \end{align*}
    and
    \begin{align*}
        &\sigma(X)= \max(\sigma_C,\sigma_R);\\
        &\tilde{v}(X)^2 \asymp \sigma_* \sigma(X).
    \end{align*}
    Therefore, we have
    \begin{align}\label{thm of free ramon}
        &\esp \norm{XX^T-\esp XX^T} \nonumber \\
        &\le 2\sigma_\infty+\sigma_C^2+C\left[\sigma_*^{1/2}\sigma_C^{3/2}+\sigma_*^{1/2}\sigma_R^{3/2}\right]\log^{3/4}(nd)+C\left[ \sigma_*\sigma_C+\sigma_*\sigma_R\right]\log^{3/2}(nd).
    \end{align}
\end{coro}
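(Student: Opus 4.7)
The corollary is a direct translation of Theorem \ref{thm:bandeira} into our variance-profile notation, so the plan is to identify each of the three quantities appearing on the right-hand side of that theorem and substitute them.

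The easy parameters are $\sigma(X)$ and $\tilde{v}(X)$. By its definition in \cite{bandeira2021matrix}, $\sigma(X)^2 = \max(\|\esp XX^T\|, \|\esp X^TX\|)$. Independent centered Gaussian entries of variance $b_{ij}^2$ force both $\esp XX^T$ and $\esp X^TX$ to be diagonal, so their operator norms coincide with their largest diagonal entries, namely $\sigma_R^2 = \max_i \sum_j b_{ij}^2$ and $\sigma_C^2 = \max_j \sum_i b_{ij}^2$. This yields $\sigma(X) = \max(\sigma_C,\sigma_R)$. For $\tilde{v}(X)$, a direct inspection of its definition in \cite{bandeira2021matrix}---which measures a covariance-of-covariance of the entries---produces $\tilde{v}(X)^2 \asymp \sigma_* \sigma(X)$: one bounds each variance $b_{ij}^2$ by $\sigma_* b_{ij}$ to get the upper bound, and the matching lower bound follows by taking $X$ with a single nonzero row (or column) of maximal entries.

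The main step is bounding $\|X_{\mathrm{free}}X_{\mathrm{free}}^T - \esp XX^T\otimes 1\|$. The free model $X_{\mathrm{free}}$ is the operator-valued semicircular element with the same covariance as $X$, so $X_{\mathrm{free}}X_{\mathrm{free}}^T$ follows an operator-valued Marchenko--Pastur distribution. I would bound its top edge by applying Lehner's formula to the Hermitization $\tilde{X} = \bigl(\begin{smallmatrix} 0 & X \\ X^T & 0 \end{smallmatrix}\bigr)$: one has $\|\tilde{X}_{\mathrm{free}}\| = \inf_{H \succ 0} \|H^{-1} + \eta(H)\|$, where $\eta$ is the covariance map sending $\mathrm{diag}(A,B)$ to $\mathrm{diag}(\mathrm{diag}_i(\sum_j b_{ij}^2 B_{jj}),\, \mathrm{diag}_j(\sum_i b_{ij}^2 A_{ii}))$. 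Choosing $H$ block-diagonal with two carefully tuned positive scalars and squaring produces the claimed bound $2\sigma_\infty+\sigma_C^2$ on $\|X_{\mathrm{free}}X_{\mathrm{free}}^T - \esp XX^T\otimes 1\|$. As a sanity check, when $b_{ij}\equiv 1$ this reproduces the classical Marchenko--Pastur top edge $(\sqrt n+\sqrt d)^2 - n = d + 2\sqrt{nd} = \sigma_C^2 + 2\sigma_\infty$, with the constant $2$ matching exactly.

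Finally, substituting the three parameters into Theorem \ref{thm:bandeira} and using $\max(\sigma_C,\sigma_R) \le \sigma_C+\sigma_R$ inside the error terms directly produces \eqref{thm of free ramon}. The main obstacle is the free-model calculation: one must verify that the minimizer in Lehner's formula gives the sharp constant $2$ in front of $\sigma_\infty$ rather than a strictly larger numerical factor, and this requires a careful optimization (or equivalently a careful analysis of the operator-valued subordination fixed-point equation $G(z) = (z - \eta(G(z)))^{-1}$). The other two steps are routine once the definitions in \cite{bandeira2021matrix} are unwound.
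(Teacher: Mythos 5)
Your handling of $\sigma(X)$ and $\tilde v(X)$ is fine and essentially matches the paper, which also reduces to observing that $\esp XX^T$ and $\esp X^TX$ are diagonal and then invokes Lemma~3.8 of \cite{bandeira2021matrix}.

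The problem is the main step. You propose to control $\norm{X_{\mathrm{free}}X_{\mathrm{free}}^T - \esp XX^T\otimes 1}$ by applying Lehner's formula to the Hermitization of $X_{\mathrm{free}}$ and optimizing over block-scalar $H$. But Lehner's formula computes $\norm{\tilde X_{\mathrm{free}}}$, hence $\norm{X_{\mathrm{free}}X_{\mathrm{free}}^T}$; it does not directly give the norm of the \emph{centered} operator. Your sanity check $b_{ij}\equiv 1$ is misleadingly benign: there $\esp XX^T = nI$ is a scalar, so $X_{\mathrm{free}}X_{\mathrm{free}}^T - nI$ shares eigenvectors with $X_{\mathrm{free}}X_{\mathrm{free}}^T$ and its norm is read off the edges of the Marchenko--Pastur support. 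In general $\esp XX^T$ is a nonconstant diagonal matrix, the shift does not commute with $X_{\mathrm{free}}X_{\mathrm{free}}^T$ in any useful way, and the operator $X_{\mathrm{free}}X_{\mathrm{free}}^T - \esp XX^T \otimes 1$ is not sign-definite. You would need a genuinely separate argument to pass from the edge of the (un-centered) free Wishart distribution to the stated bound.

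The paper sidesteps this entirely with an algebraic identity. Writing $X_{\mathrm{free}} = U + V$ with $U = \sum_k A_k\otimes l(e_k)$ and $V = \sum_k A_k\otimes l^*(e_k)$, the relation $l^*(e_k)l(e_j) = \delta_{kj}\mathbf{1}$ gives $VV^* = \esp XX^T \otimes 1$ and $U^*U = \esp X^TX \otimes 1$ \emph{exactly}. Hence
\begin{align*}
X_{\mathrm{free}}X_{\mathrm{free}}^T - \esp XX^T \otimes 1 \;=\; UU^* + UV^* + VU^*,
\end{align*}
and the triangle inequality yields $\norm{UU^*} + 2\norm{UV^*} = \sigma_C^2 + 2\norm{UV^*}$. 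The refinement over \cite{bandeira2021matrix} (which used $\norm{UV^*}\le \norm{U}\norm{V} = \sigma_R\sigma_C$) is the observation that for independent entries one can compute $\norm{UV^*} = \sigma_\infty$ directly. No Hermitization, no fixed-point equation, no optimization over $H$; it is a two-line calculation once the Fock-space representation is in hand. If you want to rescue your route, you must first identify the exact operator whose spectrum you are tracking, and it should be the centered operator $UU^* + UV^* + VU^*$, at which point the direct computation is simpler than the subordination analysis you were anticipating.
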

\begin{proof}
    Let $X_j$ be the $j$th column of $X$. Then
    \begin{align*}
        \Sigma_j=\esp X_jX_j^T=\text{diag}(b_{ij}^2).
    \end{align*}
    Hence
    \begin{align*}
        &\left\|\sum_{j \in [n]}\Sigma_j\right\|=\max_{i \in [d]}\sum_{j \in [n]}b_{ij}^2=\sigma_R^2;\\
        &\max_{j \in [n]}\text{Tr}(\Sigma_j)=\max_{j \in [n]}\sum_{i \in [d]}b_{ij}^2=\sigma_C^2.
    \end{align*}
    The computation for the parameters $\tilde{v}(X)$ and $\sigma(X)$ then follows by Lemma 3.8 in \cite{bandeira2021matrix}.

    On the other hand, denote
    \begin{align*}
        X=\sum_{i,j}g_{ij}b_{ij}E_{ij}=\sum_{k}g_{k}A_{k},
    \end{align*}
    where $E_{ij}$ is the canonical basis of the space of $d\times n$ matrices. Then the authors of \cite{bandeira2021matrix} computed that $X_{\text{free}}=U+V$, where
    \begin{align*}
        &U=\sum_{k}A_{k}\otimes l(e_k);\\
        &V=\sum_{k}A_k \otimes l^*(e_k),
    \end{align*}
    and $l$ is the creation operator of the free Fock space over $\mathbb{C}^{nd}$. In particular, we have $l^*(e_k)l(e_j)=\delta_{kj}1$. Therefore,
    \begin{align*}
        VV^*=\sum_k A_kA_k^* \otimes 1=\esp XX^T \otimes 1;\\
        U^*U=\sum_{k} A_k^*A_k \otimes 1=\esp X^TX \otimes 1.
    \end{align*}
    Hence they deduced that
    \begin{align*}
        \norm{X_{\text{free}}X_{\text{free}}^T-\esp XX^T \otimes 1} \le \norm{UV^*+VU^*+UU^*} \le 2\norm{UV^*}+\norm{UU^*}.
    \end{align*}
    The second one follows easily as
    \begin{align*}
        \norm{UU^*}=\norm{\esp X^TX}=\sigma_C^2.
    \end{align*}
    On the other hand, in \cite{bandeira2021matrix}, they used $\norm{UV^*} \le \norm{U}\norm{V}=\sigma_R\sigma_C$ to bound the first term. However, in the case of independent entries, it is a straightforward computation to check that
    \begin{align*}
        \norm{UV^*}=\sigma_\infty,
    \end{align*}
    and the result follows.
\end{proof}

Now we discuss various examples and present how Theorem \ref{main theorem} improves upon Theorems \ref{thm:zhang} and \ref{thm:bandeira}.
\begin{exa}
Assume the columns of $X$ are i.i.d, namely, $b_{ij}=b_i$. In this case, we have
\begin{align*}
    &\bullet \sigma_C=\norm{b}_2;\\
    &\bullet \sigma_R=\sqrt{n} \norm{b}_\infty;\\
    &\bullet \sigma_*=\norm{b}_\infty; \\
    &\bullet \tilde{\sigma}_\infty \le \sqrt{n}\norm{b}_\infty^2;\\
    &\bullet \bar{\sigma}_\infty=\sqrt{n}\norm{b}_\infty^2;\\
    &\bullet \sigma_\infty \le \sqrt{n}\norm{b}_\infty\norm{b}_2.
\end{align*}
In particular, 
\begin{align*}
    \frac{\tilde{\sigma}_\infty \sigma_C}{\sigma_*} \le \sqrt{n}\norm{b}_\infty\norm{b}_2.
\end{align*}
Hence, both bounds shown in Theorem \ref{main theorem} yield 
\begin{align*}
    \esp \norm{XX^T-\esp XX^T} \lesssim &
    \sqrt{n}\norm{b}_2\norm{b}_\infty+\norm{b}_2^2+\\
    &C\left[\norm{b}_2\norm{b}_\infty+\sqrt{n}\norm{b}_\infty^2\right]\sqrt{\log(n\wedge d)}+\\
    & C\norm{b}_\infty^2 \log(n\wedge d).
\end{align*}
The leading term agrees with the sharp bound derived in \cite{cai2022non,koltchinskii2017concentration}. 
\end{exa}

\begin{exa}
    Let $X$ be a Gaussian matrix with i.i.d rows, that is, $b_{ij}=b_{j}$. In this case, we have
    \begin{align*}
        &\bullet \sigma_C=\sqrt{d}\norm{b}_\infty;\\
        &\bullet \sigma_R =\norm{b}_2;\\
        &\bullet \sigma_*=\norm{b}_\infty;\\
        &\bullet \sigma_\infty=\sqrt{d-1} \norm{b}_4^2; \\
        &\bullet \tilde{\sigma}_\infty =\bar{\sigma}_\infty=\norm{b}_4^2.
    \end{align*}
    In particular,
    \begin{align*}
        \frac{\tilde{\sigma}_\infty \sigma_C}{\sigma_*}=\sqrt{d}\norm{b}_4^2.
    \end{align*}
    Hence, Theorem \ref{main theorem} implies that
    \begin{align*}
        \esp\norm{XX^T-\esp XX^T} \lesssim &\sqrt{d} \norm{b}_4^2+d\norm{b}_\infty^2 +\\
        &C\left[\sqrt{d}\norm{b}_\infty^2+\norm{b}_4^2\right]\sqrt{\log(n\wedge d)}+\\
        &C\norm{b}_\infty^2\log(n\wedge d).
    \end{align*}
    In this case, the error factor is smaller than the leading one, hence
    \begin{align*}
       \esp\norm{XX^T-\esp XX^T} \lesssim &\sqrt{d} \norm{b}_4^2+d\norm{b}_\infty^2.
    \end{align*}
    This agrees with the sharp result in \cite{cai2022non}. However, they had to derive a different method to prove this case, whereas we deduce directly from our main result that covers all cases.

    Moreover, \eqref{thm of free ramon} gives
    \begin{align*}
        \esp\norm{XX^T-\esp XX^T} \lesssim &\sqrt{d}\norm{b}_4^2+d\norm{b}_\infty^2+\\
        &C\left[\norm{b}_2^{3/2}\norm{b}_\infty^{1/2}+d^{3/4}\norm{b}_\infty^2\right]\log^{3/4}(nd)+\\
        &C\left[\sqrt{d}\norm{b}_\infty+\norm{b}_2\right]\norm{b}_\infty\log^{3/2}(nd).
    \end{align*}
    Here, we observe that the error factor is not necessarily smaller than the leading one.
\end{exa}

\begin{exa}
    Consider $b_{ij}=a_ib_j$. Then
    \begin{align*}
        &\bullet \sigma_C=\norm{a}_2 \norm{b}_\infty;\\
        &\bullet \sigma_R=\norm{a}_\infty \norm{b}_2;\\
        &\bullet \sigma_*=\norm{a}_\infty \norm{b}_\infty;\\
        &\bullet \tilde{\sigma}_\infty \le \norm{b}_4^2\norm{a}_\infty^2;\\
        &\bullet \bar{\sigma}_\infty =\norm{b}_4^2\norm{a}_\infty^2;\\
        &\bullet \sigma_\infty \le \norm{b}_4^2 \norm{a}_2\norm{a}_\infty.
    \end{align*}
    We observe that
    \begin{align*}
        \frac{\tilde{\sigma}_\infty\sigma_C}{\sigma_*} \le \norm{b}_4^2\norm{a}_2\norm{a}_\infty.
    \end{align*}
    Therefore, Theorem \ref{main theorem} implies that
    \begin{align*}
        \esp \norm{XX^T-\esp XX^T} \lesssim & \norm{b}_4^2\norm{a}_2\norm{a}_\infty +\norm{a}_2^2\norm{b}_\infty^2+\\
        &C\left[\norm{a}_2\norm{a}_\infty \norm{b}_\infty^2+\norm{b}_4^2\norm{a}_\infty^2\right]\sqrt{\log (n\wedge d)}+\\
        &C\norm{a}_\infty^2\norm{b}_\infty^2 \log(n\wedge d).
    \end{align*}
    The result in \cite{cai2022non} yields
    \begin{align*}
        \esp \norm{XX^T-\esp XX^T} \lesssim & \norm{b}_2\norm{b}_\infty\norm{a}_2\norm{a}_\infty+\norm{a}_2^2\norm{b}_\infty^2+\\
        &C\left[\norm{a}_2\norm{a}_\infty \norm{b}_\infty^2+\norm{b}_2\norm{b}_\infty\norm{a}_\infty^2\right]\sqrt{\log (n\wedge d)}+\\
        &C\norm{a}_\infty^2\norm{b}_\infty^2 \log(n\wedge d).
    \end{align*}
    And finally, \eqref{thm of free ramon} gives
    \begin{align*}
        \esp \norm{XX^T-\esp XX^T} \lesssim & \norm{b}_4^2\norm{a}_2\norm{a}_\infty +\norm{a}_2^2\norm{b}_\infty^2+\\
        &C\left[\norm{a}_2^{3/2}\norm{a}_\infty^{1/2}\norm{b}_\infty^2+\norm{b}_2^{3/2}\norm{b}_\infty^{1/2}\norm{a}_\infty^2\right]\log^{3/4}(nd)+\\
        &C\left[\norm{a}_2\norm{b}_\infty+\norm{b}_2\norm{a}_\infty\right]\norm{a}_\infty\norm{b}_\infty \log^{3/2}(nd).
    \end{align*}
    In this case, Theorem \ref{main theorem} strictly improves both and sheds light on the 4th-moment appearing for $b_j$.
\end{exa}

Our final example is where all columns have approximately the same norm.
\begin{exa}
    Suppose there exists $K \ge 1$ such that 
    \begin{align*}
        \frac{1}{K}\norm{b_k}_2 \le \norm{b_j}_2 \le K\norm{b_k}_2,
    \end{align*}
    for all $k,l \in [n]$. Then it is easy to compute 
    \begin{align*}
        \beta_\infty \le K.
    \end{align*}
    By Theorem \ref{main theorem}, we have
    \begin{align*}
        &\esp \norm{XX^T-\esp XX^T} \\
        &\le (1+\varepsilon)\left\{
    2K\sigma_\infty+\sigma_C^2+C(\varepsilon)\sigma_*\left(\sigma_C+\sigma_R\right)\sqrt{\log(n \wedge d)}+C^2(\varepsilon)\sigma_*^2 \log (n\wedge d)
    \right\}.
    \end{align*}
    Here, we do not require any additional structure on $B$ and the previous known results only show the leading term with $\sigma_C\sigma_R$ in \cite{cai2022non} and a large error factor in \cite{bandeira2021matrix}.
\end{exa}

%----------------------------------------------------------

%\section{Complements}\label{sec complements}

%------------------------------------------------------------

%-----------------------------------------------------------

\section{Lower bounds}\label{section lower bounds}
We first begin the lower bounds for the $p$-moment of the Schatten norm.
\begin{propo}
For any even $p \ge 2$ and $X$ satisfying the assumptions in Theorem \ref{main theorem}, we have
\begin{align*}
    \left(\esp \emph{Tr}(XX^T-\esp XX^T)^p\right)^{1/p} \gtrsim \sigma_p+\sigma_C^2+\sqrt{p}\bar{\sigma}_p+pb_p^2.
\end{align*}
\end{propo}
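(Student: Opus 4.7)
The plan is to establish each of the four terms on the right-hand side as a lower bound separately, relying on the fact that for even $p$ every contribution in the moment expansion of $\esp \text{Tr}(M^p)$, with $M=XX^T-\esp XX^T$, is non-negative. Opening the trace as in Section \ref{sub off-diagonal} yields a sum indexed by closed walks, whose summands are products of $b$-entries against joint moments of the form $\esp g^n(g^2-1)^m$, which are non-negative by Lemma \ref{joint moments Gaussians lemma}. Hence any sub-family of walks provides a valid lower bound.

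For the diagonal-type terms $\sqrt{p}\,\bar\sigma_p$ and $p\,b_p^2$, I would use a Rademacher symmetrisation: setting $D_\epsilon = \text{diag}(\epsilon_1,\ldots,\epsilon_d)$ for i.i.d.\ $\pm 1$ signs $\epsilon_i$, we have $\esp_\epsilon D_\epsilon M D_\epsilon = \text{Diag}(M)$, so convexity and unitary invariance of the Schatten norm give $\norm{\text{Diag}(M)}_{S_p} \le \norm{M}_{S_p}$. For even $p$ this reads $\text{Tr}(\text{Diag}(M)^p) \le \text{Tr}(M^p)$, and the matching lower bound established in Theorem \ref{thm diagonal} yields the desired $\sqrt{p}\,\bar\sigma_p+p\,b_p^2$ contribution.

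For the $\sigma_p$ term with $p=2q$, I would restrict to the ``double-star'' shapes whose $[d]$-trajectory is $i\to l_1\to i\to l_2\to i\to\cdots\to l_q\to i$ with $i,l_1,\ldots,l_q$ pairwise distinct. Each factor then appears squared, and the independence of the distinct rows $l_k$ forces the two column slots inside each $M_{il_k}^2$ to share a value; the remaining row-$i$ Gaussian moment equals $1$ on distinct columns. This yields a contribution at least
\[
\sum_i \sum_{\substack{l_1,\ldots,l_q\in[d]\setminus\{i\}\\ \text{pairwise distinct}}} \sum_{\substack{j_1,\ldots,j_q\in[n]\\ \text{pairwise distinct}}} \prod_{k=1}^q b_{ij_k}^2 b_{l_k j_k}^2 \;\gtrsim\; \sum_i \Bigl(\sum_{j,l} b_{ij}^2 b_{lj}^2\Bigr)^q = \sigma_p^p,
\]
where the distinct-versus-all comparison costs only a $p$-dependent multiplicative constant. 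For the $\sigma_C^2$ term, let $j^*$ realise $\sigma_C^2=\sum_i b_{ij^*}^2$ and select the single-column cyclic shape with every $v_k=j^*$ and pairwise distinct left indices $i_1,\ldots,i_p$; each $g_{i_k j^*}$ is used exactly twice and contributes $\esp g_{i_k j^*}^2=1$, giving a contribution $\sum_{i_1,\ldots,i_p\text{ distinct}}\prod_k b_{i_k j^*}^2\gtrsim\sigma_C^{2p}$. Summing the four resulting bounds then gives the claim.

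The main technical obstacle is the distinctness bookkeeping: one must quantify the loss incurred in restricting the sums above to pairwise-distinct indices and show that the restricted quantities remain comparable (up to universal or $p$-dependent constants) to the unrestricted products $\sigma_p^p$ and $\sigma_C^{2p}$. This is standard combinatorics along the lines of \cite{cai2022non,latala2018dimension}, but some care is needed when $p$ approaches $d$ or $n$; in that borderline regime the $\sigma_C^2$ contribution can alternatively be recovered from the direct computation $(\esp\text{Tr}(M^2))^{1/2}\ge\sigma_C^2$ together with the non-negativity principle above.
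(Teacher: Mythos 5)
Your plan is partly correct but has a real gap in the $\sigma_p$ step (and in the combinatorial version of the $\sigma_C^2$ step).

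The Rademacher pinching argument for the diagonal terms is valid and arguably cleaner than what the paper does: writing $\operatorname{Diag}(M)=\esp_\epsilon D_\epsilon M D_\epsilon$ and using convexity plus orthogonal invariance of $\norm{\cdot}_{S_p}$ gives $\text{Tr}(\operatorname{Diag}(M)^p)\le \text{Tr}(M^p)$ deterministically, whereas the paper invokes the positivity of joint moments $\esp g^n(g^2-1)^m$ (Lemma \ref{joint moments Gaussians lemma}) to make the same term-by-term comparison. Either route then combines with Theorem \ref{thm diagonal} to yield $\sqrt{p}\,\overline{\sigma}_p+pb_p^2$.

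The $\sigma_p$ term is where the argument breaks. You restrict to shapes with $i,l_1,\dots,l_q$ pairwise distinct in $[d]$ and $j_1,\dots,j_q$ pairwise distinct in $[n]$, and assert that passing from distinct to unrestricted sums costs only a $p$-dependent constant. That is false in general: if $q=p/2\ge d$ the sum over pairwise distinct $l_1,\dots,l_q\in[d]\setminus\{i\}$ is \emph{empty}, while $\sigma_p$ can be of order the full variance. A concrete example is $b_{11}=1$ and all other entries zero, where $\sigma_p=1$ but your restricted family contributes $0$ for every even $p\ge 4$. The proposition as stated is for all $p\ge 2$ (and the Schatten-norm Theorem \ref{thm second main} genuinely needs large $p$), so this regime cannot be dismissed. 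The paper sidesteps the issue entirely: it applies the deterministic lower bound of the Schatten norm by the mixed $\ell_2(\ell_p)$ norm (Lemma 2.12 in \cite{latala2018dimension}), followed by Jensen's inequality in the entries $M_{il}^2$, which needs no distinctness bookkeeping at all. Your combinatorial $\sigma_C^2$ argument (pairwise distinct $i_1,\dots,i_p$ along a fixed column $j^*$) has the same obstruction once $p>d$; you flag this, but the suggested fallback — lower-bounding $\esp\text{Tr}(M^p)$ by a power of $\esp\text{Tr}(M^2)$ — does not follow from non-negativity of the walk summands, since $(\esp\text{Tr}(M^p))^{1/p}\ge(\esp\text{Tr}(M^2))^{1/2}$ is not a consequence of positivity. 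The paper instead keeps only the $j^*$-column in the sum $\sum_j S_j$ (valid by the positivity of joint moments), uses $\norm{\cdot}_{S_p}\ge\norm{\cdot}$ and Jensen, and finishes with the sharp one-sample Koltchinskii--Lounici bound, which works uniformly in $p$.
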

\begin{proof}
    By Lemma \ref{joint moments Gaussians lemma}, the joint moments of $g$ and $g^2-1$ are always positive, thus it follows that
    \begin{align*}
        \left(\esp \text{Tr}(XX^T-\esp XX^T)^p\right)^{1/p} \ge \left(\esp \text{Tr}(\text{Diag}(XX^T)-\esp XX^T)^p\right)^{1/p} \gtrsim \sqrt{p}\bar{\sigma}_p+pb_p^2.
    \end{align*}
For the leading factor, note that the Schatten norm is always lower bounded by the mixed $l_2(l_p)$ norm (see Lemma 2.12 in \cite{latala2018dimension}), then Jensen's Inequality implies that
\begin{align*}
    \left(\esp \text{Tr}(XX^T-\esp XX^T)^p\right)^{1/p} &\ge \left[\sum_{i \in [d]}\left(\sum_{l\in [d]}\esp (XX^T-\esp XX^T)^2_{il}\right)^{p/2}\right]^{1/p}.\\
\end{align*}
The latter can be estimated as
\begin{align*}
    \esp (XX^T-\esp XX^T)_{il}^2&=\sum_{j \in [n]} \esp (X_jX_j^T-\esp X_jX_j)^2_{il}\\
    &=\sum_{j \in [n]} \esp \left(b_{ij}b_{lj}(g_{ij}g_{lj}-\mathbf{1}_{i=l})\right)^2\\
    &\ge \sum_{j \in [n]}b_{ij}^2b_{lj}^2.
\end{align*}
Hence
\begin{align*}
    \left(\esp \text{Tr}(XX^T-\esp XX^T)^p\right)^{1/p} &\ge\left[\sum_{i \in [d]}\left(\sum_{j \in [n]}\sum_{l\in [d] }b_{ij}^2b_{lj}^2\right)^{p/2}\right]^{1/p}\\
    &\ge\sigma_p.
\end{align*}

Finally, let $j^*$ be the column with the largest Euclidean norm, that is, $\norm{b_{j^*}}_2=\sigma_C$, then
\begin{align*}
     \left(\esp \text{Tr}(XX^T-\esp XX^T)^p\right)^{1/p} \ge  \left(\esp \text{Tr}(X_{j^*}X_{j^*}^T-\esp X_{j^*}X_{j^*}^T)^p\right)^{1/p}.
\end{align*}
The sharp result for the i.i.d case proved by  Koltchinskii and Lounici in \cite{koltchinskii2017concentration} implies that
\begin{align*}
    \left(\esp \text{Tr}(X_{j^*}X_{j^*}^T-\esp X_{j^*}X_{j^*}^T)^p\right)^{1/p} & \ge \esp \norm{X_{j^*}X_{j^*}^T-\esp X_{j^*}X_{j^*}^T}\\
    &\gtrsim \sigma_C^2,
\end{align*}
where we assume $p$ is even for the first inequality.
\end{proof}

For the operator norm, we have the following result.
\begin{lemma}
    Let $X$ be a random matrix satisfying the assumptions of Theorem \ref{main theorem}. Then
    \begin{align*}
        \left(\esp \norm{XX^T-\esp XX^T}^2\right)^{1/2} \gtrsim \sigma_\infty+\sigma_C^2.
    \end{align*}
\end{lemma}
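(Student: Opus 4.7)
Set $Y := XX^T - \esp XX^T$. The plan is to derive two separate lower bounds, $(\esp \norm{Y}^2)^{1/2} \gtrsim \sigma_\infty$ and $(\esp \norm{Y}^2)^{1/2} \gtrsim \sigma_C^2$, via two different test-vector arguments, and then conclude by $\max(a,b)\ge (a+b)/2$.

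For the $\sigma_\infty$ contribution, I would pick $i^*$ attaining the maximum in the definition of $\sigma_\infty^2$ and use the trivial inequality $\norm{Y}^2 \ge \norm{Ye_{i^*}}^2$. The off-diagonal entries $Y_{li^*} = \sum_{j\in[n]} b_{lj}b_{i^*j}g_{lj}g_{i^*j}$ (for $l\ne i^*$) are centred with second moment exactly $\sum_{j\in [n]}b_{lj}^2 b_{i^*j}^2$, so summing over $l\ne i^*$ gives directly $\esp\norm{Y}^2 \ge \sigma_\infty^2$.

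For the $\sigma_C^2$ contribution, let $j^*$ achieve $\sigma_C^2 = \sum_{i\in[d]} b_{ij^*}^2$ and decompose $Y = A + B$, where $B = X_{j^*}X_{j^*}^T - \esp X_{j^*}X_{j^*}^T$ and $A = \sum_{j\ne j^*}(X_jX_j^T - \esp X_jX_j^T)$ are independent and centred. Conditional Jensen applied to the convex function $\norm{\cdot}$ gives $\esp\norm{A+B}\ge \esp\norm{\esp[A+B\mid B]} = \esp\norm{B}$, so, using $(\esp\norm{Y}^2)^{1/2}\ge\esp\norm{Y}$, it suffices to prove the one-column lower bound $\esp\norm{B}\gtrsim \sigma_C^2$. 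I would test $B$ against two unit vectors. The random direction $u = X_{j^*}/\norm{X_{j^*}}$ gives
\begin{align*}
u^T B u \;=\; \norm{X_{j^*}}^2 - \frac{X_{j^*}^T \Sigma_{j^*} X_{j^*}}{\norm{X_{j^*}}^2} \;\ge\; \norm{X_{j^*}}^2 - \norm{\Sigma_{j^*}}
\end{align*}
by the Rayleigh quotient bound; taking expectation yields $\esp\norm{B} \ge \sigma_C^2 - \norm{\Sigma_{j^*}}$, where $\Sigma_{j^*}=\text{diag}(b_{ij^*}^2)$. The coordinate direction $u = e_{i_0}$ with $i_0 = \arg\max_i b_{ij^*}^2$ yields $|u^T B u| = b_{i_0 j^*}^2\,|g_{i_0 j^*}^2-1|$ and hence $\esp\norm{B} \gtrsim b_{i_0 j^*}^2 = \norm{\Sigma_{j^*}}$. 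Since $\max(\sigma_C^2-\norm{\Sigma_{j^*}},\,\norm{\Sigma_{j^*}}) \ge \sigma_C^2/2$, combining the two test vectors closes the argument.

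The main obstacle, though relatively mild, is precisely this one-column step: one has to cover both the spread regime $\norm{\Sigma_{j^*}} \ll \sigma_C^2$ (handled by the random Rayleigh direction) and the spiky regime $\norm{\Sigma_{j^*}} \asymp \sigma_C^2$ (handled by the coordinate direction). This is in essence the Koltchinskii--Lounici lower bound specialised to the degenerate $n=1$ setting, and the two test vectors above furnish a direct elementary proof without invoking it. Everything else reduces to second-moment computations and the conditional Jensen inequality.
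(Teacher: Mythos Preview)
Your proof is correct, but the route differs from the paper's. The paper invokes two black boxes: Tropp's lower bound $(\esp \norm{\sum_j S_j}^2)^{1/2} \gtrsim \bigl\|\sum_j \esp S_j^2\bigr\|^{1/2} + (\esp \max_j \norm{S_j}^2)^{1/2}$ to obtain the $\sigma_\infty$ term from the first summand, and the Koltchinskii--Lounici lower bound applied to the single column $j^*$ to obtain $\sigma_C^2$ from the second. You instead give a fully elementary argument: the coordinate test vector $e_{i^*}$ yields $\esp\norm{Y}^2 \ge \esp\norm{Ye_{i^*}}_2^2 \ge \sigma_\infty^2$ directly, and for $\sigma_C^2$ you combine conditional Jensen (to isolate the column $j^*$) with the two test vectors $X_{j^*}/\norm{X_{j^*}}$ and $e_{i_0}$, which is precisely a hands-on proof of the degenerate $n=1$ Koltchinskii--Lounici bound. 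Your approach is self-contained and avoids external citations; the paper's is shorter on the page but relies on those imported results. Both are equally valid here, and your two-test-vector treatment of the one-column case makes transparent why the spread/spiky dichotomy arises.
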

\begin{proof}
    Let $S_j=X_jX_j^T-\esp X_iX_i^T$, then Tropp's result in \cite{tropp2016expected} implies that
\begin{align*}
    (\esp \norm{XX^T-\esp XX^T}^2)^{1/2} \gtrsim \left\|\sum_{j \in [n]}\esp S_jS_j^T\right\|^{1/2}+ (\esp \max_{j \in [n]} \norm{S_j}^2)^{1/2}.
\end{align*}
The matrix in the first bound can then be computed as
\begin{align*}
    \sum_{j \in [n]}\esp X_jX_j^TX_jX_j^T-(\esp X_jX_j^T)^2.
\end{align*}
This can easily be seen as a diagonal matrix (a similar argument was proved in \cite{cai2022non}) and lower bounded by
\begin{align*}
    \left\|\sum_{j \in [n]}\esp X_jX_j^TX_jX_j^T-(\esp X_jX_j^T)^2\right\| \ge \sigma_\infty^2,
\end{align*}
hence
\begin{align*}
    (\esp \norm{XX^T-\esp XX^T}^2)^{1/2} \gtrsim \sigma_\infty.
\end{align*}
On the other hand, the second term can be bounded as
\begin{align*}
    (\esp \max_{j \in [n]} \norm{S_j}^2)^{1/2} \ge \max_{j \in [n]}(\esp \norm{S_j}^2)^{1/2} \gtrsim \sigma_C^2,
\end{align*}
where we again use the lower bound of \cite{koltchinskii2017concentration}.

\end{proof}

%---------------------------------------------------------------

%\subsection{The Euclidean norm of columns}\label{sub euclidean norm of columns}

%---------------------------------------------------------------

%\bibliographystyle{johd}
%\bibliographystyle{abbrvnat}
%\nocite{*}
%\bibliography{bib}
\printbibliography

\end{document}